\documentclass{amsart}

%
%


\pdfoutput=1

\usepackage{amsmath,amsthm,amssymb} 
\usepackage{comment}

\usepackage{graphicx}

\theoremstyle{plain}
\newtheorem{theorem}{Theorem}[section]
\newtheorem{proposition}[theorem]{Proposition}
\newtheorem{lemma}[theorem]{Lemma}

\theoremstyle{definition}
\newtheorem{definition}[theorem]{Definition}
\newtheorem{example}[theorem]{Example}
\newtheorem{remark}[theorem]{Remark}

\usepackage{multicol}

\usepackage{wrapfig}

\usepackage{tabularx}
\usepackage{here}

\newcommand{\R}{{\mathbb R}}

\newcommand{\Z}{{\mathbb Z}}

\title
[Colorings, invariants and double coverings of twisted links]
{Twisted intersection colorings, invariants and double coverings of twisted links}

\author[H.~Ito]{Hiroki ITO}
\address[H.~Ito]{Department of Mathematics, Osaka University, 1-1 Machikaneyama, Toyonaka, Osaka 560-0043, Japan}
\email{hiroki.ito.math@gmail.com}

\author[S.~Kamada]{Seiichi KAMADA}
\address[S.~Kamada]{Department of Mathematics, Osaka University, 1-1 Machikaneyama, Toyonaka, Osaka 560-0043, Japan}
\email{kamada@math.sci.osaka-u.ac.jp}


\date{}

\keywords{Virtual knot; twisted knot; index polynomial; intersection coloring; double covering.}

\subjclass[2020]{57K12}

\begin{document}

\maketitle

\renewcommand{\abstractname}{Abstract}

\begin{abstract}
Twisted links are a generalization of classical links and correspond to stably equivalence classes of links in thickened surfaces. 
In this paper we introduce twisted intersection colorings of a diagram and construct two invariants of a twisted link using such colorings. 
As an application, we show that there exist infinitely many pairs of twisted links such that for each pair the two twisted links  are not equivalent but their double coverings are equivalent.   

We also introduce a method of constructing a pair of twisted links whose double coverings are equivalent.
\end{abstract}


%
%
\section{Introduction}

Twisted links are a generalization of classical links. 
They are represented by link diagrams possibly with virtual crossings and bars \cite{Bou}. 
As virtual links correspond to stably equivalence classes of links in thickened oriented surfaces \cite{CKS02, KK00, Kau99}, 
twisted links correspond to stably equivalence classes of links in thickened, possibly nonorientable, surfaces \cite{Bou}.  

One of the purpose of this paper is to introduce twisted intersection colorings of a diagram and to construct two invariants of a twisted link using such colorings.  We will construct three polynomials 
called index polynomials, $\psi$, $\psi_{+1}$ and $\psi_{-1}$, which are computed from a diagram with a twisted intersection coloring.  
The polynomial $\psi$ does not depend on the coloring, and it is an invariant of a twisted link.  
For the other two polynomials, $\psi_{+1}$ and $\psi_{-1}$, depend on the coloring. However the pair $(\psi_{+1}, \psi_{-1})$ under a certain equivalence relation, called $2$-equivalence, is an invariant of a twisted link.  This is our main result (Theorem~\ref{thm b}.)  

For a given twisted link, a virtual link called a {\it double covering} is defined in \cite{KK16}.  
It was unknown if there is a pair of distinct twisted links whose double coverings are equivalent as virtual links. 
As an application of Theorem~\ref{thm b} 
we show that there exist infinitely many pairs of twisted links such that for each pair the two twisted links are not equivalent but their double coverings are equivalent (Theorem~\ref{thm:doubleB}).  This is the second main result of this paper.  

Another purpose of this paper is to introduce a method of constructing a pair of twisted links such that their double coverings are the same.  The pairs used in Theorem~\ref{thm:doubleB} are constructed by this method.  

When we restrict our invariant $\psi$ to virtual links with one component, i.e., virtual knots,  the invariant $\psi$ is essentially the same with the affine index polynomial \cite{Kau12} or equivalently the writhe invariants in \cite{NNS} and \cite{ST}.  The restriction of $\psi$ to virtual links with two components is essentially the same with the linking polynomial invariant in defined \cite{CG}. On the other hand, the polynomials $\psi_{+1}$ and $\psi_{-1}$ are always trivial for virtual link diagrams.   

In \cite{K13} another polynomial invariant, also called the index polynomial, was defined for a twisted link.  
Our invariant is different from that in  \cite{K13}.  We show an example of a pair of twisted links such that  
the invariant in \cite{K13} does not distinguish them but our invariant does. 

The paper is organized as follows: In Section~\ref{sect:virtual-twisted} we recall the definitions of virtual links and twisted links. 
In Section~\ref{sect:twisted-coloring} we introduce the notion of a twisted intersection coloring of a twisted link diagram. 
In Section~\ref{sect:invariants} the three index polynomials, $\psi$, $\psi_{+1}$ and $\psi_{-1}$, and $2$-equivalence on pairs $(\psi_{+1}, \psi_{-1})$ are introduced.   Theorem~\ref{thm b} is proved there too.  
In Section~\ref{sect:double} we recall the notion of double coverings of twisted links and prove Theorem~\ref{thm:doubleB}.  In Section~\ref{sect:construction}  we introduce a method of constructing a pair of twisted links such that their double coverings are the same.

This research was supported by JSPS KAKENHI Grant Number JP19H01788.

\section{Virtual links and twisted links}
\label{sect:virtual-twisted}

A {\em virtual link diagram} is a link diagram in $\R^2$ possibly with some crossings, called virtual crossings, which are not assigned over/under information and are decorated with a small circle (Figure~\ref{fig:suri}).

\begin{figure}[htbp]
\begin{tabular}{cc}
\begin{minipage}{0.40\hsize}
\begin{center}
\includegraphics[height=1.8cm]{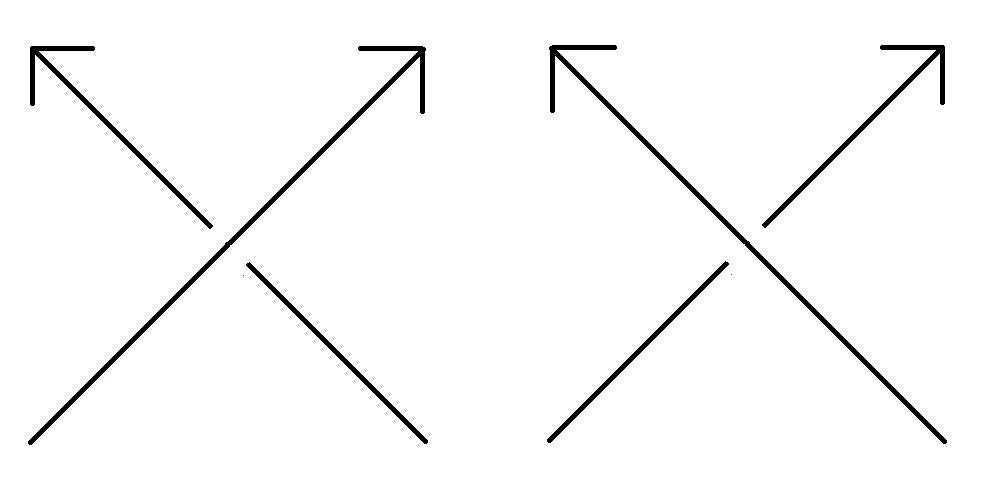}

A positive/negative crossing
\end{center}
\end{minipage}

\begin{minipage}{0.30\hsize}
\begin{center}
\includegraphics[height=1.8cm]{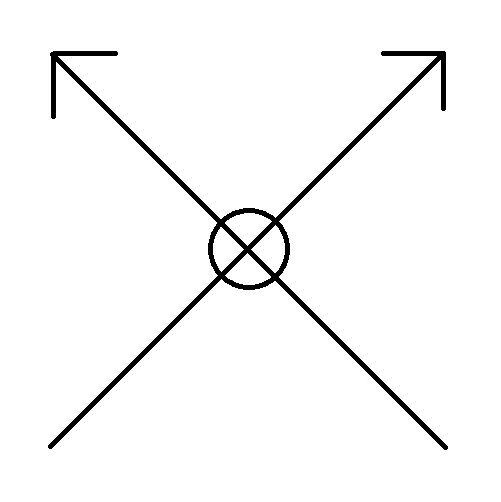}

A virtual crossing
\end{center}
\end{minipage}

\begin{minipage}{0.30\hsize}
\begin{center}
\includegraphics[height=1.8cm]{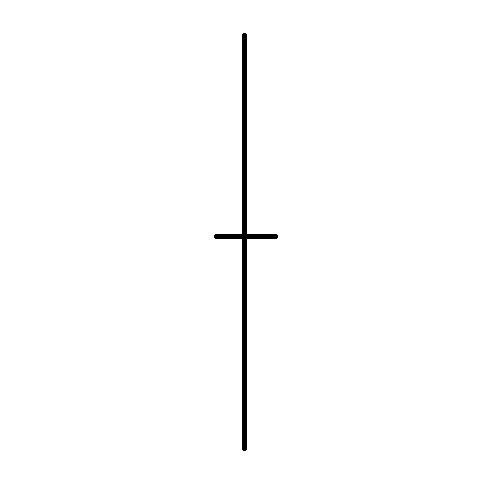}

A bar 
\end{center}
\end{minipage}
\end{tabular}
\caption{Classical crossings, a virtual crossing and a bar}
\label{fig:suri}
\end{figure}

\vspace{-0.3cm}

Local moves R1, R2, R3, V1, \dots, V4 depicted in Figure~\ref{fig:suri-R1-V4} are called 
{\em generalized Reidemeister moves}.  
Two diagrams $D$ and $D'$ are said to be {\em equivalent as virtual links} if they are related by a finite sequence of isotopies of $\R^2$ and generalized Reidemeister moves.  We denote it by $D\overset{\rm v}{\sim} D'$.  
A {\em virtual link} is an equivalence class of virtual link diagrams under the equivalence relation $\overset{\rm v}{\sim}$, \cite{Kau99}.

\begin{figure}[htbp]
\begin{tabular}{cc}
\begin{minipage}{0.30\hsize}
\begin{center}
\includegraphics[height=1.4cm]{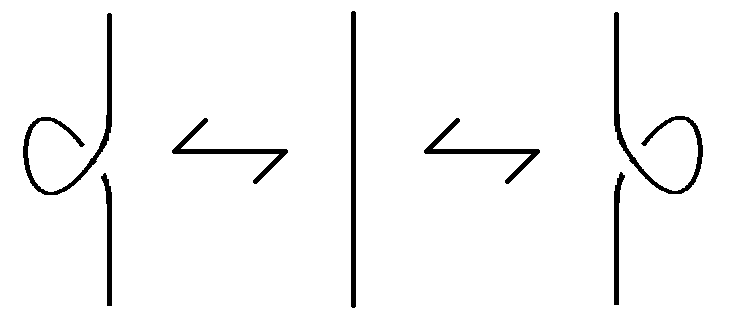}

${\rm R1}$
\end{center}
\end{minipage}
\begin{minipage}{0.30\hsize}
\begin{center}
\includegraphics[height=1.4cm]{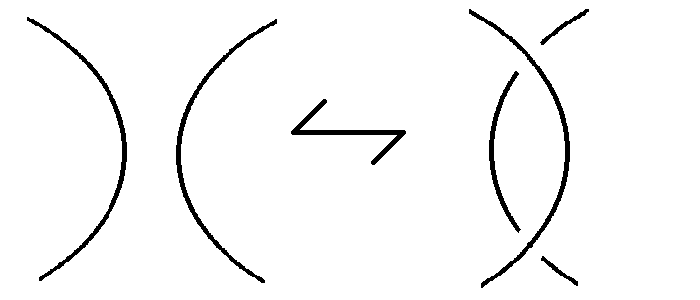}

${\rm R2}$
\end{center}
\end{minipage}
\begin{minipage}{0.30\hsize}
\begin{center}
\includegraphics[height=1.4cm]{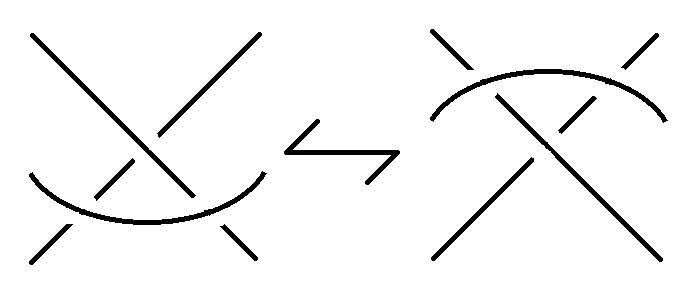}

${\rm R3}$
\end{center}
\end{minipage}
\end{tabular}
\end{figure}

\vspace{-0.5cm}

\begin{figure}[htbp]
\begin{tabular}{cc}
\begin{minipage}{0.20\hsize}
\begin{center}
\includegraphics[height=1.5cm]{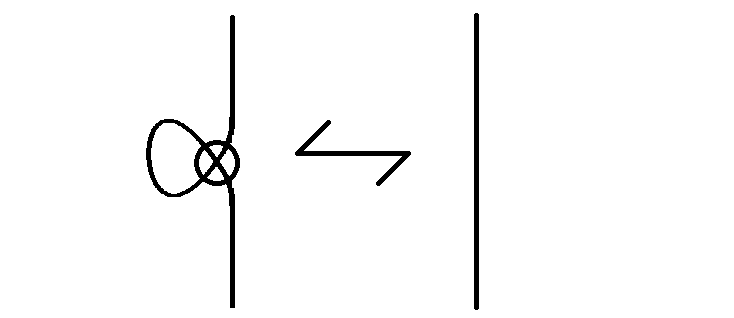}

${\rm V1}$
\end{center}
\end{minipage}
\begin{minipage}{0.25\hsize}
\begin{center}
\includegraphics[height=1.5cm]{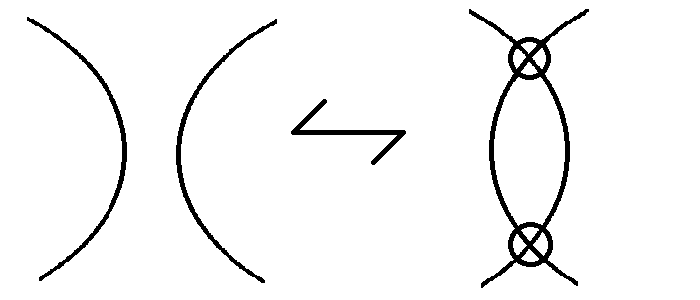}

${\rm V2}$
\end{center}
\end{minipage}
\begin{minipage}{0.25\hsize}
\begin{center}
\includegraphics[height=1.5cm]{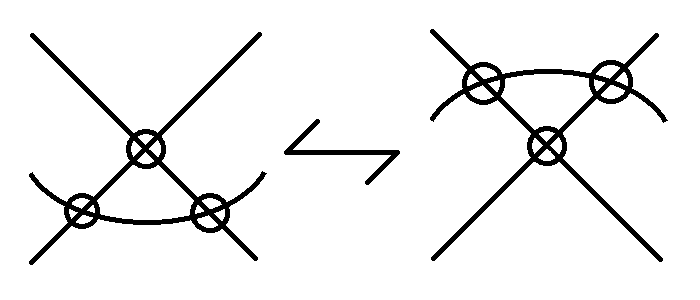}

${\rm V3}$
\end{center}
\end{minipage}
\begin{minipage}{0.25\hsize}
\begin{center}
\includegraphics[height=1.5cm]{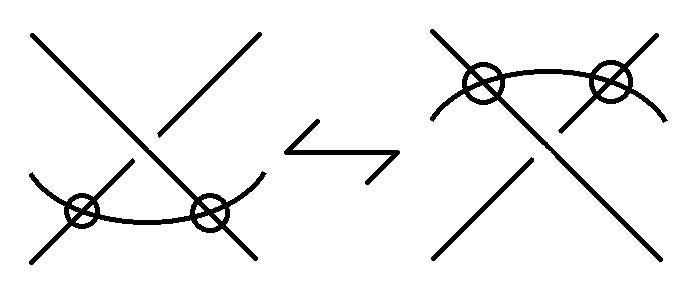}

${\rm V4}$
\end{center}
\end{minipage}
\end{tabular}
\caption{Generalized Reidemeister moves}
\label{fig:suri-R1-V4}
\end{figure}

\vspace{-0.3cm}

A {\em twisted link diagram} is a link diagram possibly with some virtual crossings and some bars, where a bar is a short segment intersecting an arc of the diagram as in Figure~\ref{fig:suri}.  
Local moves R1, R2, R3, V1, \dots, V4, T1, T2 and T3 (Figures~\ref{fig:suri-R1-V4} and~\ref{fig:suri-T1-T3}) 
are called {\em extended Reidemeister moves}.  
Two diagrams $D$ and $D'$ are said to be {\em equivalent as twisted links} if they are related by a finite sequence of isotopies of $\R^2$ and extended Reidemeister moves.  We denote it by $D\overset{\rm t}{\sim} D'$.  
A {\em twisted link} is an equivalence class of twisted link diagrams under the equivalence relation $\overset{\rm t}{\sim}$, \cite{Bou}.

\begin{figure}[H]
\begin{tabular}{cc}
\begin{minipage}{0.30\hsize}
\begin{center}
\includegraphics[height=2.0cm]{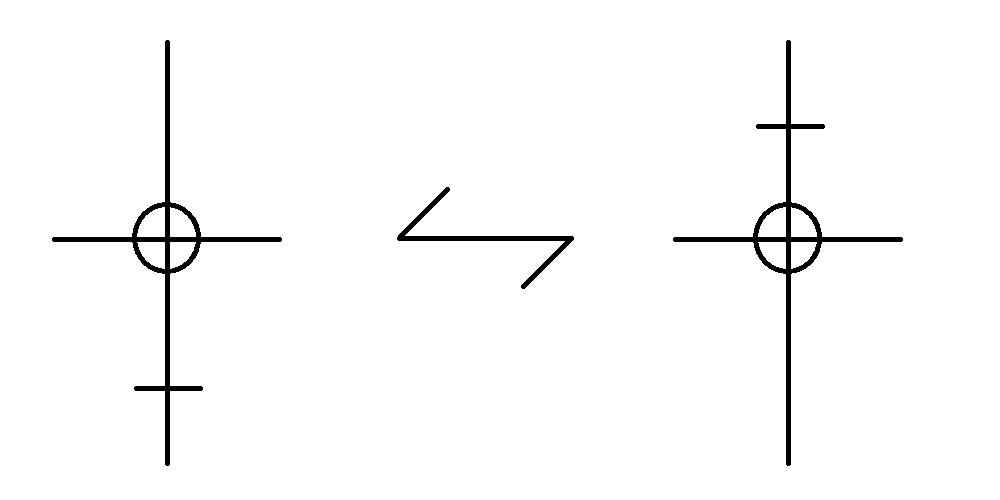}

${\rm T1}$
\end{center}
\end{minipage}
\begin{minipage}{0.30\hsize}
\begin{center}
\includegraphics[height=2.0cm]{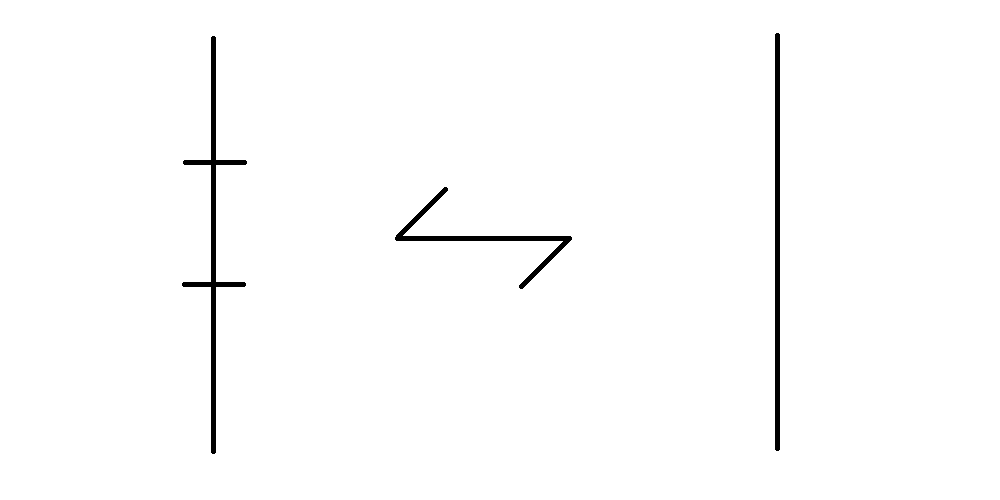}

${\rm T2}$
\end{center}
\end{minipage}
\begin{minipage}{0.35\hsize}
\begin{center}
\includegraphics[height=2.0cm]{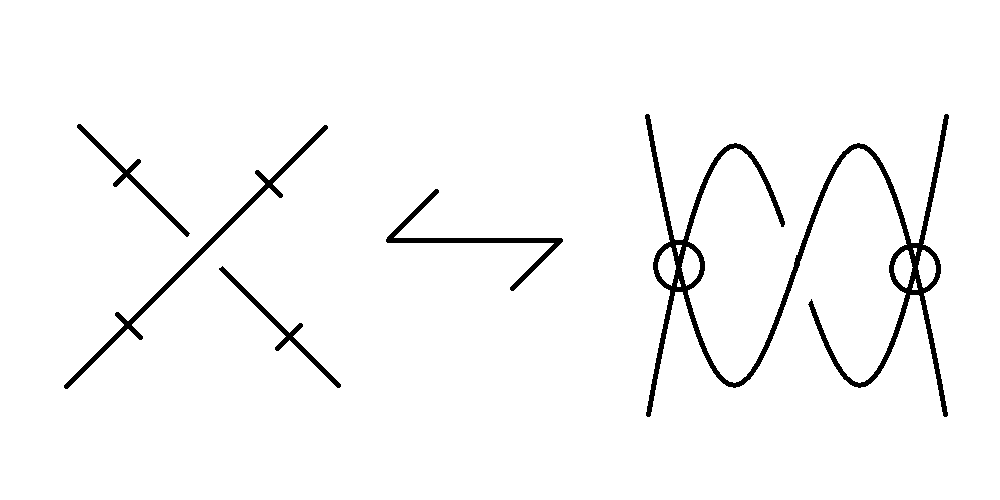}

${\rm T3}$
\end{center}
\end{minipage}
\end{tabular}
\vspace{0.1cm}
\caption{Twisted Reidemeister moves}
\label{fig:suri-T1-T3}
\end{figure}

Link diagrams are virtual link diagrams, and virtual link diagrams are twisted link diagrams. 
If two link diagrams are equivalent as links, i.e, related by a finite sequence of isotopies of $\R^2$ and Reidemeister moves R1, R2, R3, then they are equivalent as virtual links. If two virtual link diagrams are equivalent as virtual links, then they are equivalent as twisted links.  It is known that two link diagrams are equivalent as links if and only if they are equivalent as virtual links (or twisted links).

In this paper we assume that a virtual/twisted link diagram $D$ is {\em ordered}, i.e., the components are numbered by positive integers, and we consider that two virtual/twisted link diagrams are equivalent if they are equivalent with respect to the ordering.  

Let 
$D=D_1\cup\cdots\cup D_m$ be a twisted link diagram, where $D_i$ $(i=1, \dots m)$ is the $i$th component of $D$.

We say that the component $D_i$ is {\em of even type} if there are an even number of bars on $D_i$, or {\em of odd type} otherwise.  Note that if $D$ and $D'$ are equivalent then the parity of the $i$th component of $D$ is the same with that of $D'$.   

For example, the first component $D_1$ of the diagram below, $D=D_1\cup D_2$, is of even type and the second component $D_2$ is of odd type. 

\begin{figure}[htbp]
\begin{center}
\includegraphics[height=3.0cm]{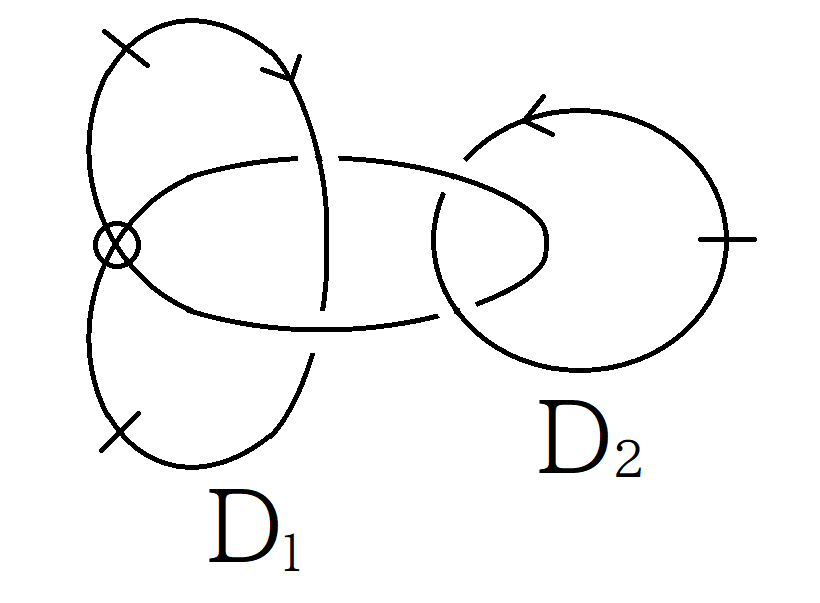}
\end{center}
\caption{$D=D_1\cup D_2$}
\label{fig:suri11}
\end{figure}

\section{Twisted intersection colorings}
\label{sect:twisted-coloring}

In this section we introduce the notion of a twisted intersection coloring.     

For a non-negative integer $n$, we denote by $\Z_n$ the cyclic group $\Z/n\Z$.  
An element $[k] = k + n \Z$ of $\Z_n$ is often denoted by its representative $k \in \Z$. 

Let $D=D_1\cup\ldots\cup D_m$ be a twisted link diagram.  
A {\em semiarc} of $D$ means an arc or a loop obtained by dividing the components of $D$ at all classical crossings and bars.  (A semiarc may have self-intersections, each of which is a virtual crossing of $D$. A semiarc is a loop if and only if it is a component of $D$ missing classical crossings and bars.)   

We denote by $\mathcal{A}(D_i)$ the set of semiarcs of $D$ on the $i$th component $D_i$.

\begin{definition}
Let $D=D_1\cup\ldots\cup D_m$ be a twisted link diagram.   
A {\em twisted intersection coloring mod $n$} on $D_i$ is a map 
$C:\mathcal{A}(D_i) \to \Z_n\times\{\pm 1\}$ 
satisfying the condition illustrated in Figure~\ref{fig:suri-C1B}, where we forget over/under information at classical crossings: 
\begin{itemize}
\item[(1)] 
When we pass through a classical crossing on $D_i$, 
the value $(a,e) \in \Z_n\times\{\pm 1\}$ changes into $(a+1,e)$ or $(a-1,e)$ as in Figure~\ref{fig:suri-C1B}, 
depending on the orientation of the arc we pass through at the crossing.   
\item[(2)] 
When we pass through a bar on $D_i$, the value $(a,e) \in \Z_n\times\{\pm 1\}$ changes into $(-a, -e)$. 
\end{itemize}
\begin{figure}[H]
\begin{center}
\includegraphics[height=2.5cm]{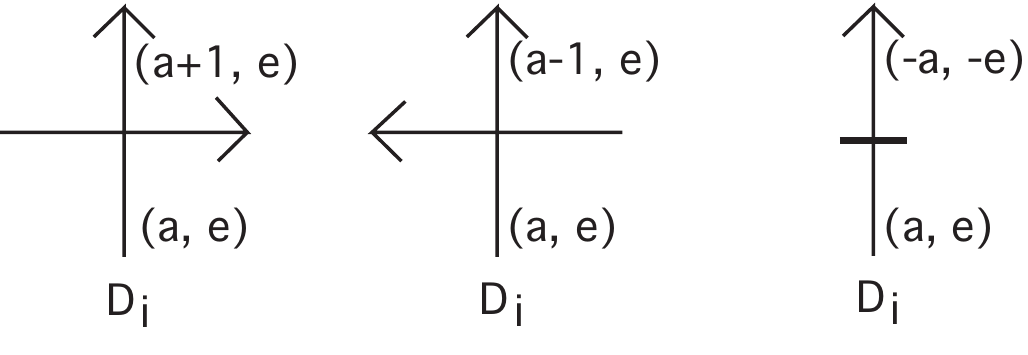}
\caption{Twisted intersection coloring}
\label{fig:suri-C1B}
\end{center}
\end{figure}
\end{definition}

In particular, a twisted intersection coloring mod $1$ on $D_i$ is an assignment of $(0,+1)$ or $(0,-1) \in \Z_1\times\{\pm 1\}$ to each semiarc on $D_i$ such that it changes at bars but does not at classical crossings. Thus, $D_i$ admits a twisted intersection coloring mod $1$  if and only if 
 $D_i$ is of even type.  In this case, there are two twisted intersection colorings mod $1$ on $D_i$. 

Suppose that $D_i$ is of even type.  Take a twisted intersection coloring mod $1$ on $D_i$ and fix it.  
Each semiarc on $D_i$ is colored with $(0,+1)$ or $(0,-1)$.   
We denote by 
$S_{L,+1}(D_i)$, $S_{L,-1}(D_i)$, $S_{R,+1}(D_i)$, and  $S_{R,-1}(D_i)$   
the set of classical crossings on $D_i$ as in Figure~\ref{fig:suri-d58B} respectively. 

\begin{figure}[H]
\begin{center}
\includegraphics[height=2.5cm]{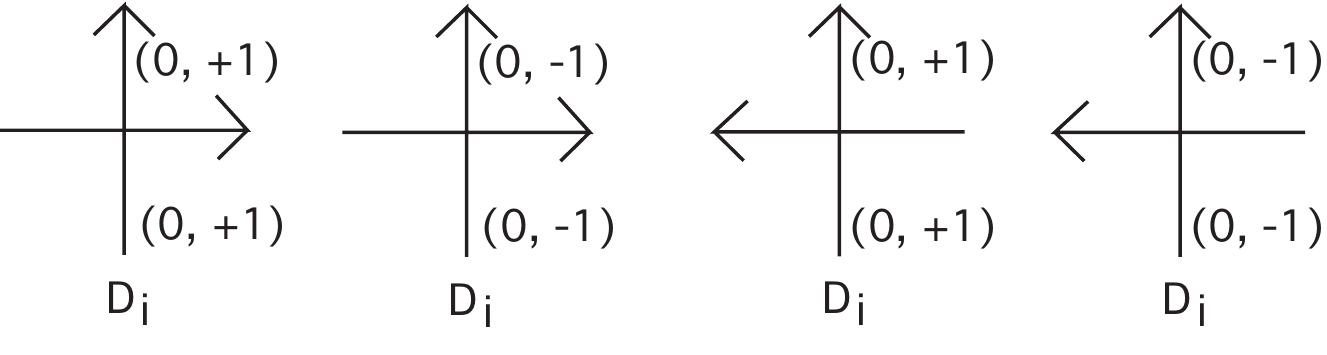}
\caption{$S_{L,+1}(D_i)$,  $S_{L,-1}(D_i)$,   $S_{R,+1}(D_i)$,  and  $S_{R,-1}(D_i)$}
\label{fig:suri-d58B}
\end{center}
\end{figure}

\begin{definition}
Let $D=D_1\cup\ldots\cup D_m$ be a twisted link diagram. Suppose that $D_i$ is of even type. 
The $i$th {\em twisted defect} of $D$ is 
a non-negative integer $d_i(D)$ defined by 
$$d_i(D)=| \# S_{L,+1}(D_i) - \# S_{L,-1}(D_i) - \# S_{R,+1}(D_i) + \# S_{R,-1}(D_i)|.$$
\end{definition}

The value $d_i(D)$ does not depend on a choice of the coloring mod $1$ on $D_i$.

\begin{proposition}
\label{prop:twisted_defect}
The $i$th twisted defect $d_i(D)$ is an invariant of a twisted link, namely, 
for two diagrams $D=D_1\cup\ldots\cup D_m$ and $D'=D'_1\cup\ldots\cup D'_m$ whose $i$th components are of even type, if $D \overset{\rm t}{\sim} D'$ then $d_i(D) = d_i(D')$.  
\end{proposition}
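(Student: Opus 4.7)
The plan is to verify that $d_i$ is preserved under each extended Reidemeister move and is independent of the choice of twisted intersection coloring mod $1$ on $D_i$; since $D\overset{\rm t}{\sim}D'$ means that $D$ and $D'$ are connected by a finite sequence of such moves, that suffices. Each move check reduces to an entirely local calculation in a small disk where the move takes place.

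First I would argue independence of the coloring. A component $D_i$ of even type admits exactly two twisted intersection colorings mod $1$, related by the involution $(0,\pm1)\mapsto(0,\mp1)$ applied to every semiarc. This involution simultaneously exchanges $S_{L,+1}(D_i)\leftrightarrow S_{L,-1}(D_i)$ and $S_{R,+1}(D_i)\leftrightarrow S_{R,-1}(D_i)$, so the signed quantity $\delta_i(D):=\#S_{L,+1}(D_i)-\#S_{L,-1}(D_i)-\#S_{R,+1}(D_i)+\#S_{R,-1}(D_i)$ merely changes sign, and $d_i=|\delta_i|$ is unchanged. This lets us freely choose the coloring in each subsequent local argument.

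Next I would handle the moves that add no new classical crossing on $D_i$: V1--V4 only rearrange virtual crossings, T1 removes a pair of consecutive bars (bracketing a small segment whose coloring is inverted but which contains no classical crossing on $D_i$), and T2 slides a bar past a virtual crossing. In each case the four sets $S_{L/R,\pm1}(D_i)$ are unaffected, so $d_i$ is preserved. Move R1 introduces a single self-crossing on $D_i$ whose two arcs share the same $e$-value; by the convention encoded in Figure~\ref{fig:suri-d58B}, the two passes of $D_i$ through this crossing contribute to one $S_{L,e}$-slot and one $S_{R,e}$-slot, so their net effect on $\delta_i$ is zero. R2 creates two classical crossings of opposite signs with matching local $e$-values; distinguishing whether both strands, one strand, or neither strand lies on $D_i$, one checks by direct pairing that the introduced contributions to the four sets cancel in $\delta_i$. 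R3 is verified by matching the three crossings on the two sides of the move, noting that each crossing's L/R-type and incident $e$-value are preserved under the move.

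The main obstacle is T3. Pushing a bar past a classical crossing flips the $e$-value on the two arcs lying on the far side of the bar, so the affected crossing can migrate among the four sets $S_{L/R,\pm1}(D_i)$; simultaneously the mod-$1$ coloring on part of $D_i$ outside the local picture may need to be re-selected, and this global readjustment must be tracked consistently with the first step. For each sub-configuration of T3, distinguished by the sign of the crossing and by which strand carries the bar (and whether that strand belongs to $D_i$), one checks that the combined effect on $\delta_i$ is either null or an overall sign change, leaving $d_i=|\delta_i|$ invariant. Once the local diagrams are drawn out, the verification is mechanical, but enumerating the sub-cases is where essentially all the work lies.
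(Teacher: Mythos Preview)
Your overall plan---check invariance under each extended Reidemeister move, with a preliminary remark that $d_i$ does not depend on the choice of mod~$1$ coloring---is exactly the paper's approach, and your treatment of R1, R2, R3, V1--V4, T1, T2 matches theirs.

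The gap is in your understanding of T3. Your phrasing (``pushing a bar past a classical crossing'', ``which strand carries the bar'') shows you are picturing a move in which a \emph{single} bar on \emph{one} strand slides across a crossing. That is not T3. In T3 there is a bar on \emph{each} of the two strands, both on the same side of the crossing, and the move slides both bars to the other side while reflecting the crossing. Consequently each strand still meets exactly one bar inside the local disk after the move, so the mod~$1$ coloring on every semiarc outside the disk is unchanged; there is no ``global readjustment'' to track, and your proposed case split on ``which strand carries the bar'' is vacuous. If you actually attempted to verify invariance of $\delta_i$ under the one-bar move you describe, you would find it fails---that move is not among the extended Reidemeister moves.

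With the correct picture of T3 the verification is a one-liner, as in the paper: the crossing $\tau$ passes from $S_{L,e}(D_i)$ to $S_{R,-e}(D'_i)$ (and from $S_{R,e}$ to $S_{L,-e}$), and since $S_{L,e}$ and $S_{R,-e}$ enter the formula for $\delta_i$ with the same sign, $\delta_i$ itself (not merely $|\delta_i|$) is preserved. So T3 is not the ``main obstacle''; it is no harder than R3.
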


\begin{proof}
It is sufficient to consider a case where $D'$ is obtained from $D$ by an extended Reidemeister move and the 
twisted intersection colorings mod $1$ on $D_i$ and on $D'_i$ are identical outside of the region where the move is applied.

Suppose that $D'$ is obtained from $D$ by a move ${\rm R1}$ on a component $D_j$ of $D$ and the corresponding component $D'_j$ of $D'$ has an additional crossing, say $\tau$.  
Then $\tau \in S_{L,e}(D'_j) \cap S_{R,e}(D'_j)$ for some $e$. 
Hence $d_i(D) = d_i(D')$.  

Suppose that $D'$ is obtained from $D$ by a move ${\rm R2}$ on components $D_{j_1}$ and $D_{j_2}$.   
The corresponding components $D'_{j_1}$ and $D'_{j_2}$ of $D'$ has additional two crossings $\tau_1$ and $\tau_2$. 
If one of $\tau_1$ and $\tau_2$ belongs to 
$S_{L,e}(D'_i)$ then the other belongs to $S_{R,e}(D'_i)$. 
Hence  we have $d_i(D) = d_i(D')$.   

Suppose that $D'$ is obtained from $D$ by a move ${\rm R3}$ on components $D_{j_1}$, $D_{j_2}$, $D_{j_3}$ involving three crossings $\tau_1, \tau_2, \tau_3$ and the corresponding components $D'_{j_1}$, $D'_{j_2}$, $D'_{j_3}$ involves three crossings $\tau'_1, \tau'_2, \tau'_3$. Here we assume that if $\tau_1$ is a crossing between $D_{j_2}$ and $D_{j_3}$ then 
$\tau'_1$ is a crossing between $D'_{j_2}$ and $D'_{j_3}$, and so on.  If   
$\tau_1 \in S_{L,e}(D_i)$ then  $\tau'_1 \in S_{L,e}(D'_i)$ and if 
$\tau_1 \in S_{R,e}(D_i)$ then  $\tau'_1 \in S_{R,e}(D'_i)$, and so on. 
Hence  we have $d_i(D) = d_i(D')$.   

When $D'$ is obtained from $D$ by a move ${\rm V1}$, \dots, ${\rm V4}$, ${\rm T1}$ or ${\rm T2}$, it is obvious that $d_i(D) = d_i(D')$. 

Suppose that $D'$ is obtained from $D$ by a move ${\rm T3}$ and a crossing $\tau$ of $D$ becomes $\tau'$. 
If $\tau \in S_{L,e}(D_i)$ then $\tau' \in S_{R,-e}(D'_i)$, and 
if $\tau \in S_{R,e}(D_i)$ then $\tau' \in S_{L,-e}(D'_i)$.  Hence $d_i(D) = d_i(D')$.
\end{proof}

In what follows, ${\rm pr}_1:\Z_n\times\{\pm 1\}\to\Z_n$ and ${\rm pr}_2:\Z_n\times\{\pm 1\}\to\{\pm 1 \}$ denote the first and the second factor projections.

\begin{lemma}
\label{lem e2}
There exists a twisted intersection coloring mod $n$ on $D_i$ if and only if $D_i$ is of even type and $n|d_i(D)$. 
\end{lemma}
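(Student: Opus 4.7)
The strategy is to reduce both directions to a single monodromy computation around the closed curve $D_i$.

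For the ``only if'' direction, given a twisted intersection coloring $C$ mod $n$ on $D_i$, composing with ${\rm pr}_2$ yields a labeling of $\mathcal{A}(D_i)$ by $\pm 1$ that flips at bars and is constant through classical crossings. Well-definedness around the loop $D_i$ forces the number of bars on $D_i$ to be even, so $D_i$ is of even type. For the divisibility $n \mid d_i(D)$, I would fix a basepoint on $D_i$ and track the value of ${\rm pr}_1 \circ C$ through one full traversal. If the $2k$ bars split $D_i$ into segments indexed $j = 1, \ldots, 2k$, with $s_j$ denoting the signed count of classical crossings in segment $j$ (each contributing $\pm 1$ according to the rule in Figure~\ref{fig:suri-C1B}), then bar reflections $a \mapsto -a$ give the recursion $A_j = -(A_{j-1} + s_j)$ and hence
\[
A_{2k} - A_0 \;=\; s_1 - s_2 + s_3 - \cdots - s_{2k} \pmod{n}.
\]
Grouping the $s_j$ by the parity of $j$, which matches the $\pm 1$ value of the mod-$1$ coloring on each segment, this alternating sum collapses to $\pm \bigl(\#S_{L,+1}(D_i) - \#S_{L,-1}(D_i) - \#S_{R,+1}(D_i) + \#S_{R,-1}(D_i)\bigr)$, whose absolute value is $d_i(D)$. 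Since $C$ is well-defined, this expression must vanish mod $n$, giving $n \mid d_i(D)$.

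For the ``if'' direction, assume $D_i$ is of even type and $n \mid d_i(D)$. I would fix any twisted intersection coloring mod $1$ on $D_i$ (which exists by the remarks preceding the definition of $d_i(D)$), choose a base semiarc and any initial value $(a_0, e_0) \in \Z_n \times \{\pm 1\}$ extending the mod-$1$ value on that semiarc, and then propagate a candidate coloring around $D_i$ using rules (1) and (2) of the definition. The only possible obstruction is that the propagation be consistent when one returns to the base semiarc; by the same monodromy computation, this obstruction equals $\pm d_i(D) \bmod n$, which vanishes by hypothesis. Hence the coloring extends uniquely to all of $\mathcal{A}(D_i)$.

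The main technical obstacle is the sign bookkeeping in the monodromy calculation: one must verify carefully that the reflections at bars, combined with the alternating pattern of the second coordinate dictated by the mod-$1$ coloring, produce precisely the combination $\#S_{L,+1} - \#S_{L,-1} - \#S_{R,+1} + \#S_{R,-1}$ appearing in the definition of $d_i(D)$ rather than some other $\Z$-linear combination of these four counts. Once that identification is pinned down, the lemma follows immediately in both directions.
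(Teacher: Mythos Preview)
Your approach is correct and essentially the same as the paper's: both arguments propagate a $\Z\times\{\pm 1\}$-labeling once around $D_i$ and identify the obstruction to closing up as $(\pm d_i(D),\,(-1)^{\#\text{bars}})$. The only organizational difference is that the paper labels semiarcs one at a time and asserts the final monodromy value directly, whereas you group consecutive semiarcs into segments between bars and derive the recursion $A_j=-(A_{j-1}+s_j)$; the sign identification you flag as the ``main technical obstacle'' is exactly the step the paper states without further comment.
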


\begin{proof}
Take a semiarc $\alpha_0 \in \mathcal{A}(D_i)$ and name the semiarcs on $D_i$ with 
$\alpha_0, \alpha_1, \dots,$ $\alpha_{N-1}, \alpha_N (= \alpha_0)$ so that they appear on $D_i$ in this order along the direction of $D_i$.  
We consider a labeling on these semiarcs by elements of $\Z\times\{\pm 1\}$ 
as follows: Assign $(0,0)$ to $\alpha_0$, and assign elements of $\Z\times\{\pm 1\}$
to the semiarcs inductively as in Figure~\ref{fig:suri-C1B}.  Then the first factor of the label assigned $\alpha_N$ is 
$\# S_{L,+1}(D_i) - \# S_{L,-1}(D_i) - \# S_{R,+1}(D_i) + \# S_{R,-1}(D_i)$ and the second factor is $+1$ (or $-1$) if $D_i$ is of even (or odd) type.  There exists a twisted intersection coloring mod $n$ on $D_i$ if and only if 
$\# S_{L,+1}(D_i) - \# S_{L,-1}(D_i) - \# S_{R,+1}(D_i) + \# S_{R,-1}(D_i) = 0$ in $\Z_n$ and $D_i$ is of even type.  
\end{proof}

\begin{lemma}
\label{lem:e}
Let $D=D_1\cup\cdots\cup D_m$ be a twisted link diagram. Suppose that $D_i$ is of even type. 
Let 
$C_i$ and $C'_i$ be twisted intersection colorings mod $n$ on $D_i$. 
Suppose that 
${\rm pr}_2\circ C_i ={\rm pr}_2\circ C'_i$.  
Then there exists an integer $k$ mod $n$ such that for any $\alpha \in \mathcal{A}(D_i)$, the following condition is satisfied: 
\begin{itemize}
\item 
${\rm pr}_2\circ C_i(\alpha)=+1$ implies ${\rm pr}_1\circ C_i(\alpha)={\rm pr}_1\circ C'_i(\alpha)+k$, and 
\item
${\rm pr}_2\circ C_i(\alpha)=-1$ implies ${\rm pr}_1\circ C_i(\alpha)={\rm pr}_1\circ C'_i(\alpha)-k$.
\end{itemize}
\end{lemma}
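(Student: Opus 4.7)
The plan is to define a candidate value of $k$ semiarc by semiarc and then show it is constant on $\mathcal{A}(D_i)$. For each $\alpha\in\mathcal{A}(D_i)$, write $C_i(\alpha)=(a(\alpha),e(\alpha))$ and $C'_i(\alpha)=(a'(\alpha),e(\alpha))$, where the second coordinates agree by hypothesis, and set
$$k(\alpha)\;=\;e(\alpha)\cdot\bigl(a(\alpha)-a'(\alpha)\bigr)\in\Z_n.$$
If $k(\alpha)$ is independent of $\alpha$, then writing $k$ for the common value and splitting on the sign of $e(\alpha)$ immediately yields the two bulleted implications in the conclusion.

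To prove that $k$ is constant, I would traverse $D_i$ and verify that $k$ does not change across the events at which consecutive semiarcs meet, namely classical crossings and bars on $D_i$ (virtual crossings do not subdivide semiarcs, so nothing has to be checked there). At a classical crossing on $D_i$, the second coordinate $e$ is preserved by both colorings and the first coordinates of $C_i$ and $C'_i$ are each shifted by the \emph{same} $\pm 1$, this shift being determined by the geometry of the crossing (which strand we pass through and in which direction), not by the coloring. Hence $a-a'$ and $e$ are both unchanged, so $k$ is unchanged. At a bar the coloring takes $(a,e)\mapsto(-a,-e)$ and $(a',e)\mapsto(-a',-e)$, and the new value of $k$ is
$$(-e)\bigl((-a)-(-a')\bigr)\;=\;e(a-a'),$$
so $k$ is again preserved.

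Since the semiarcs on $D_i$ are cyclically arranged along $D_i$ and any two consecutive ones differ by one such event, $k(\alpha)$ is constant on $\mathcal{A}(D_i)$. The only place where a little care is required is the sign bookkeeping at bars, where both coordinates flip simultaneously; the definition $k(\alpha)=e(\alpha)(a(\alpha)-a'(\alpha))$ is precisely the combination that is invariant under this double flip, which is why it is the right quantity to track. With constancy in hand, unpacking the definition according to whether $e(\alpha)=+1$ or $-1$ gives exactly the two stated conclusions.
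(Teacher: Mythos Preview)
Your proof is correct and follows essentially the same approach as the paper's: both arguments fix the candidate $k$ from the coloring values on a semiarc and then traverse $D_i$ semiarc by semiarc, checking that the relation propagates across classical crossings and bars. Your formulation $k(\alpha)=e(\alpha)\bigl(a(\alpha)-a'(\alpha)\bigr)$ makes the sign bookkeeping at bars slightly more explicit than the paper's terse induction, but the underlying idea is identical.
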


\begin{proof}
Take a semiarc $\alpha_0 \in \mathcal{A}(D_i)$ and name the semiarcs on $D_i$ with 
$\alpha_0, \alpha_1, \dots,$ $\alpha_{N-1}, \alpha_N (= \alpha_0)$ so that they appear on $D_i$ in this order along the direction of $D_i$.  
Let $k = {\rm pr}_1\circ C_i(\alpha_0) - {\rm pr}_1\circ  C'_i(\alpha_0) \in \Z_n$.  By induction, we see that the desired condition is satisfied for all $s=0, 1, \dots, N$.    
\end{proof}

\begin{example}
Let $D=D_1$ be a twisted link diagram with one component depicted in Figure~\ref{fig:suri9B}. 
The twisted defect 
$d_1(D)$ is $0$.  The coloring 
$C_1:\mathcal{A}(D_1) \to \Z\times\{\pm 1\}$ depicted in Figure~\ref{fig:suri10B} is a twisted intersection coloring mod $0$ on $D_1$.  

\begin{figure}[H]
\begin{tabular}{cc}
\begin{minipage}{0.45\hsize}
\begin{center}
\includegraphics[height=3.5cm]{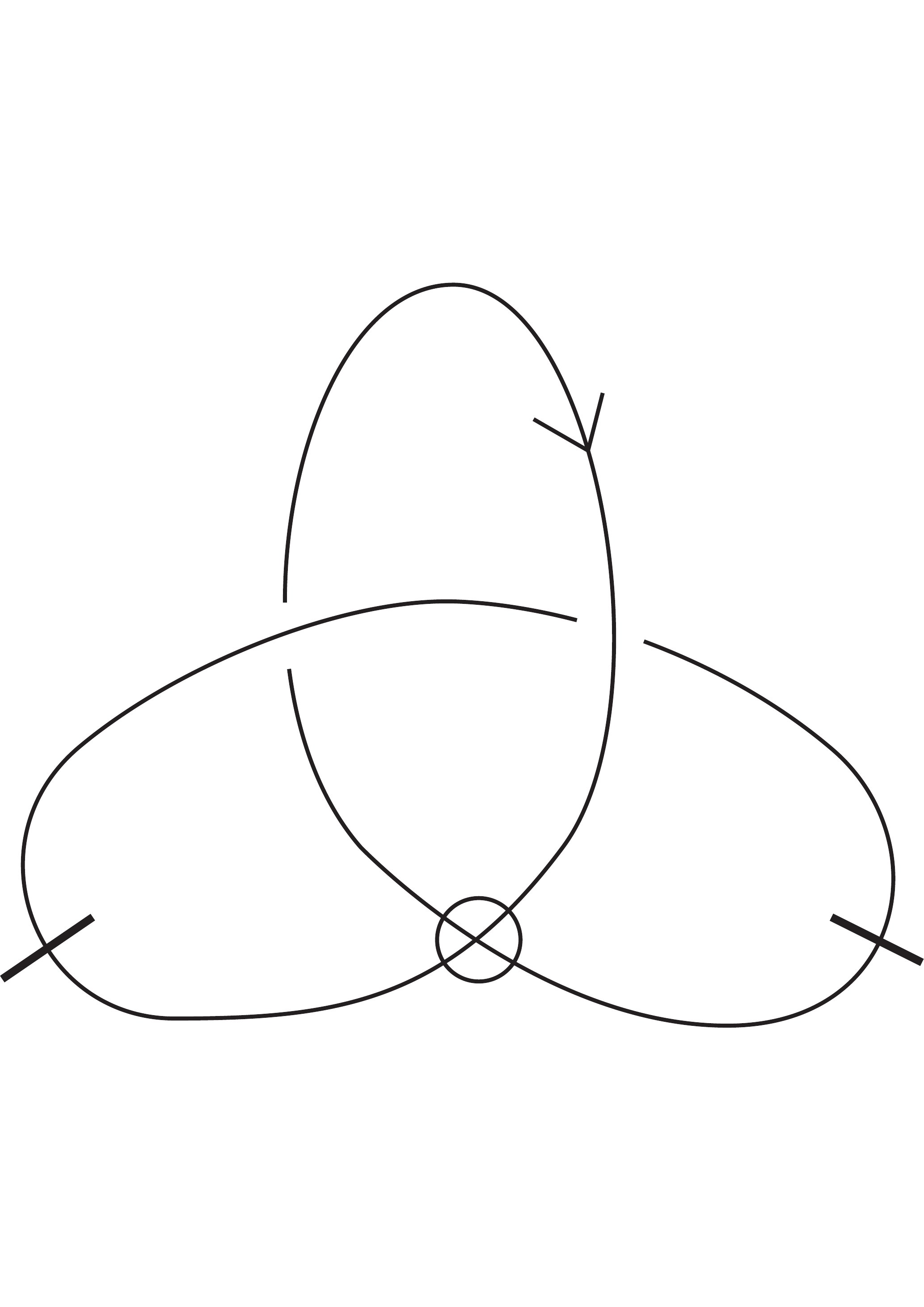}
\caption{$D=D_1$}
\label{fig:suri9B}
\end{center}
\end{minipage}
\begin{minipage}{0.50\hsize}
\begin{center}
\includegraphics[height=4.5cm]{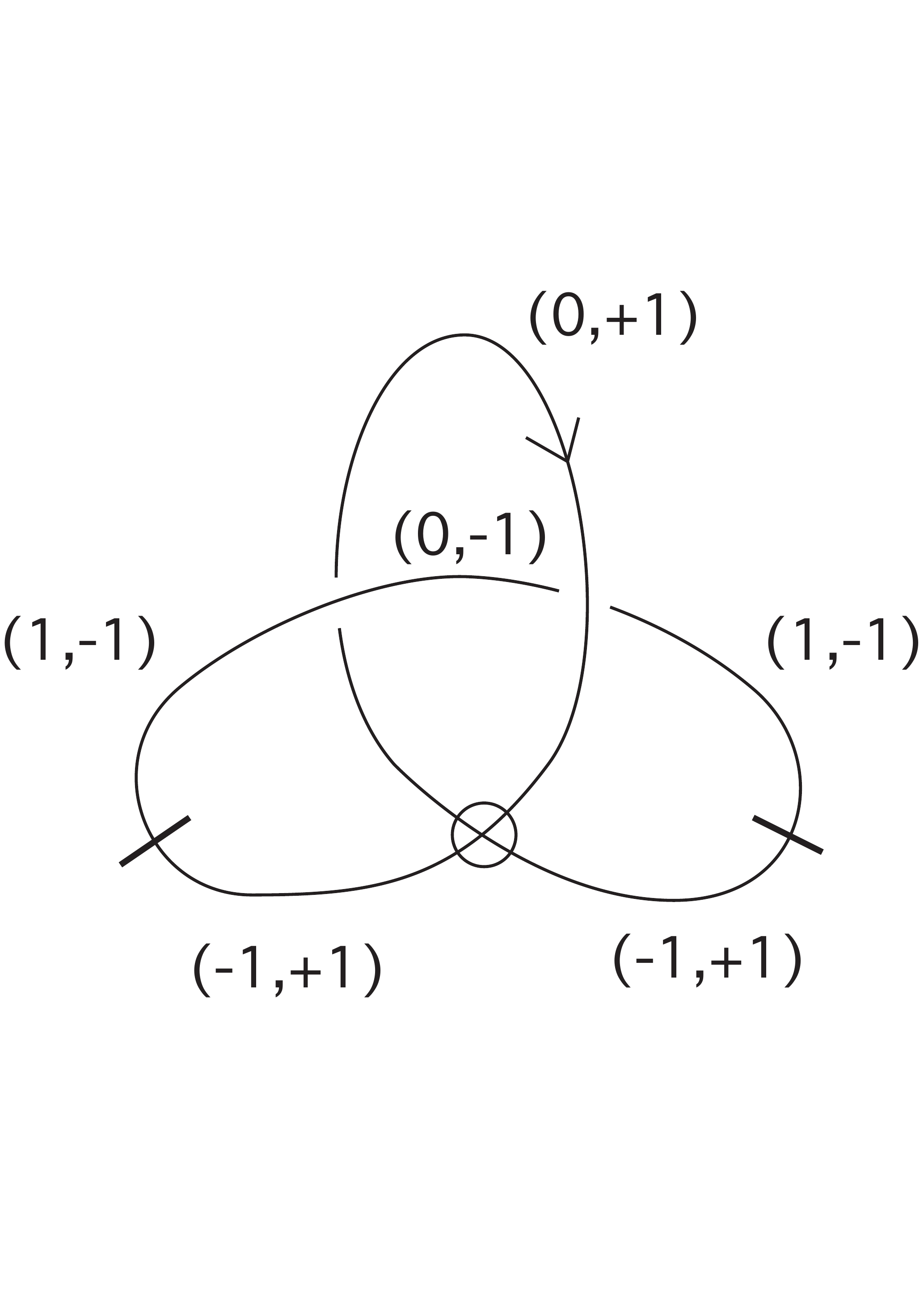}
\caption{A twisted intersection coloring mod~$0$}
\label{fig:suri10B}
\end{center}
\end{minipage}
\end{tabular}
\end{figure}

\end{example}

Let $D=D_1\cup\cdots\cup D_m$ be a twisted link diagram and $C_i$ be a twisted intersection coloring mod $n$ on $D_i$. 
Suppose that $D'=D'_1\cup\cdots\cup D'_m$ is a twisted link diagram obtained from $D$ by an extended Reidemeister move.  Then there exists a unique twisted intersection coloring mod $n$ on $D'_i$ such that $C_i$ and $C'_i$ are identical outside the region where the move is applied.  We say that {\em $C'_i$ is obtained from $C_i$ by the extended Reidemeister move}.

\begin{definition}
Let $D=D_1\cup\cdots\cup D_m$ and $D'=D'_1\cup\cdots\cup D'_m$ be twisted link diagrams. 
Let $C_i$ and $C'_i$ be twisted intersection colorings mod $n$ on $D_i$ and on $D'_i$, respectively.  
We say that $D$ with $C_i$ and $D'$ with $C'_i$ are {\em equivalent} if there exists a finite sequence of extended Reidemeister moves such that  both $D'$ and $C'_i$ are obtained from $D$ and $C_i$ by the sequence.  
\end{definition}

\section{Index polynomials and invariants}
\label{sect:invariants}

\begin{definition}
\label{def:twisted weight}
Let $D=D_1\cup\cdots\cup D_m$ be a twisted link diagram. 
Suppose that $D_i$ is of even type and let 
$C_i$ be a twisted intersection coloring mod $d_i(D)$ on $D_i$. 
For a classical crossing  $\tau$ which is a self-crossing of $D_i$,  
we define the {\em weight} $W_{(D_i, C_i)}(\tau)$ by 
$$W_{(D_i, C_i)}(\tau) = a-b  \in\Z_{d_i(D)},$$
where $a$ and $b$ are the first factors of the 
labels around the crossing $\tau$ as in Figure~\ref{fig:suri-wB}. 

\begin{figure}[H]
\begin{center}
\includegraphics[height=2.0cm]{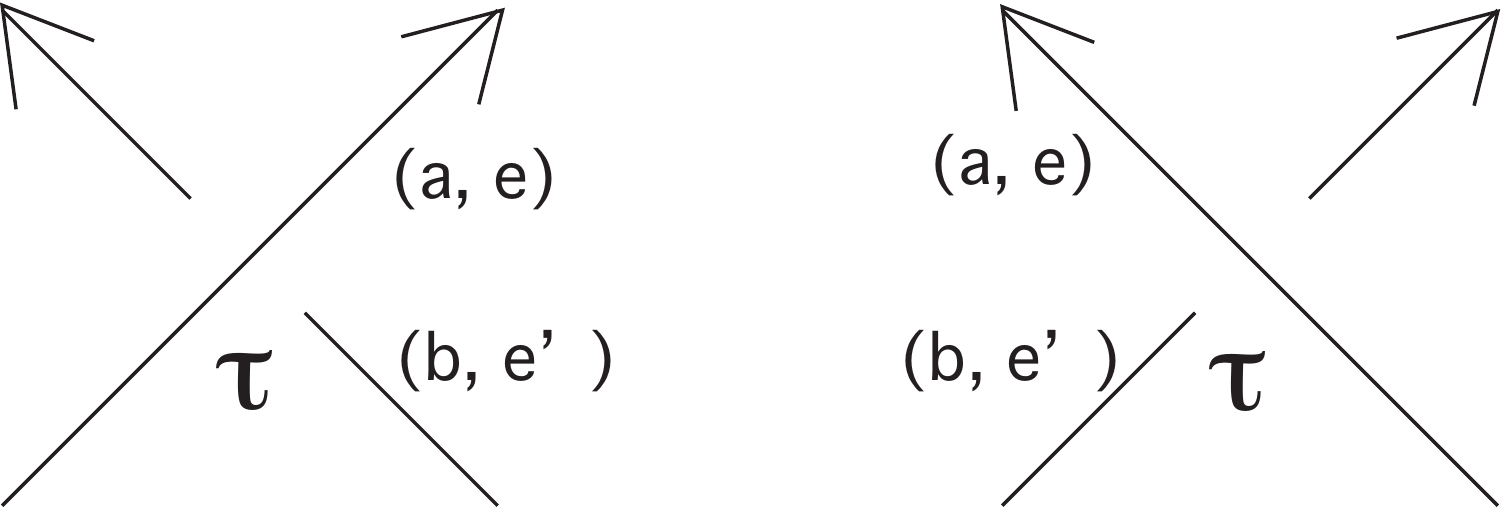}
\end{center}
\caption{$W_{(D_i, C_i)}(\tau) = a-b$}
\label{fig:suri-wB}
\end{figure}

\end{definition}

Let $D_i$ be a component of even type and $C_i$ be a twisted intersection coloring mod $d_i(D)$ on $D_i$.  
We denote by 
$T^{(D_i,C_i)}$, $T_{+1}^{(D_i,C_i)}$ and $T_{-1}^{(D_i,C_i)}$ the sets of self-crossings of $D_i$ such that the colors around $\tau$ are as in Figure~\ref{fig:suri-3-5B} respectively:   

$\tau \in T^{(D_i,C_i)}$ $\Leftrightarrow$ The second factors of the over and the under arcs are the same. 

$\tau \in T_{+1}^{(D_i,C_i)}$ $\Leftrightarrow$ The second factor of the over arc is $+1$ and that of the under is $-1$. 

$\tau \in T_{-1}^{(D_i,C_i)}$ $\Leftrightarrow$ The second factor of the over arc is $-1$ and that of the under is $+1$.

\begin{figure}[H]
\begin{center}
\includegraphics[height=1.8cm]{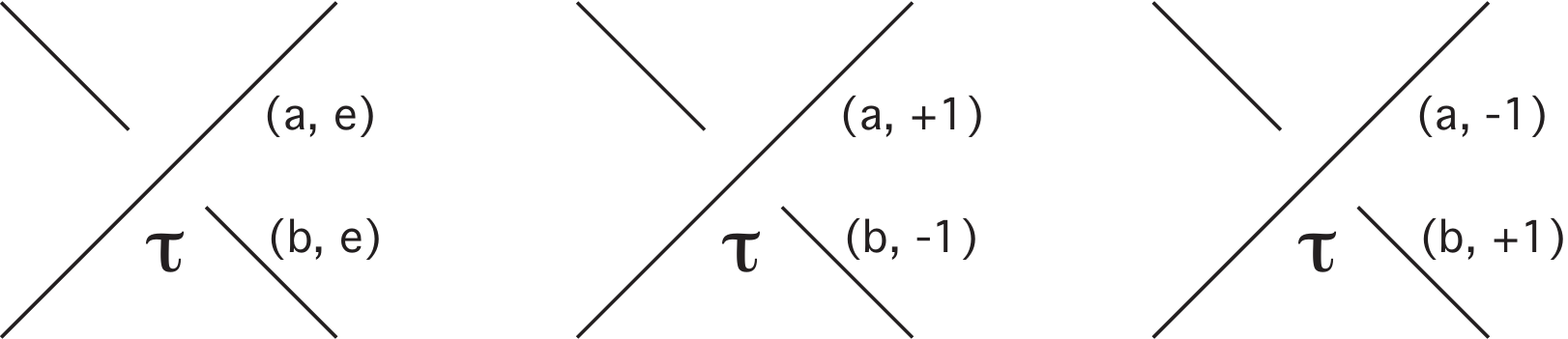}
\end{center}
\caption{$T^{(D_i,C_i)}$, $T_{+1}^{(D_i,C_i)}$ and $T_{-1}^{(D_i,C_i)}$}
\label{fig:suri-3-5B}
\end{figure}

\begin{definition}
\label{t ind}

Let $D_i$ be a component of even type and $C_i$ be a twisted intersection coloring mod $d_i(D)$ on $D_i$.  
The {\em index polynomials} 
$\psi^{(D_i,C_i)}(t)$, $\psi_{+1}^{(D_i,C_i)}(t)$ and 
$\psi_{-1}^{(D_i,C_i)}(t)$ are polynomials in 
$\Z[t^{\pm 1}]/(t^{d_i(D)}-1)$ defined as follows:  
\begin{eqnarray}
\psi^{(D_i,C_i)}(t)  &  = &  \sum_{\tau\in T^{(D_i,C_i)}} {\rm sign}(\tau)(t^{W_{(D_i,C_i)}(\tau)}-1), \\
\psi_{+1}^{(D_i,C_i)}(t)  &  = & \sum_{\tau\in T_{+1}^{(D_i,C_i)}} {\rm sign}(\tau)t^{W_{(D_i,C_i)}(\tau)},  \\
\psi_{-1}^{(D_i,C_i)}(t)  & =  &  \sum_{\tau\in T_{-1}^{(D_i,C_i)}} {\rm sign}(\tau)t^{W_{(D_i,C_i)}(\tau)}. 
\end{eqnarray}
\end{definition}

\begin{example}
Let $D=D_1$ be a twisted link diagram with $d_1(D)=0$ and let  
$C_1:\mathcal{A}(D_1) \to \Z\times\{\pm 1\}$ be a twisted intersection coloring mod $0$  
depicted in Figure~\ref{fig:suri10-2B}.   
Then 
$\tau_1\in T_{+1}^{(D_1,C_1)}$, $W_{(D_1,C_1)}(\tau_1)=-1$, 
$\tau_2\in T_{-1}^{(D_1,C_1)}$ and $W_{(D_1,C_1)}(\tau_2)=1$. 
Hence 
$\psi^{(D_1,C_1)}(t)=0$, $\psi_{+1}^{(D_1,C_1)}(t)=t^{-1}$ and $\psi_{-1}^{(D_1,C_1)}(t)=t$.  

\begin{figure}[H]
\begin{center}
\includegraphics[height=3.0cm]{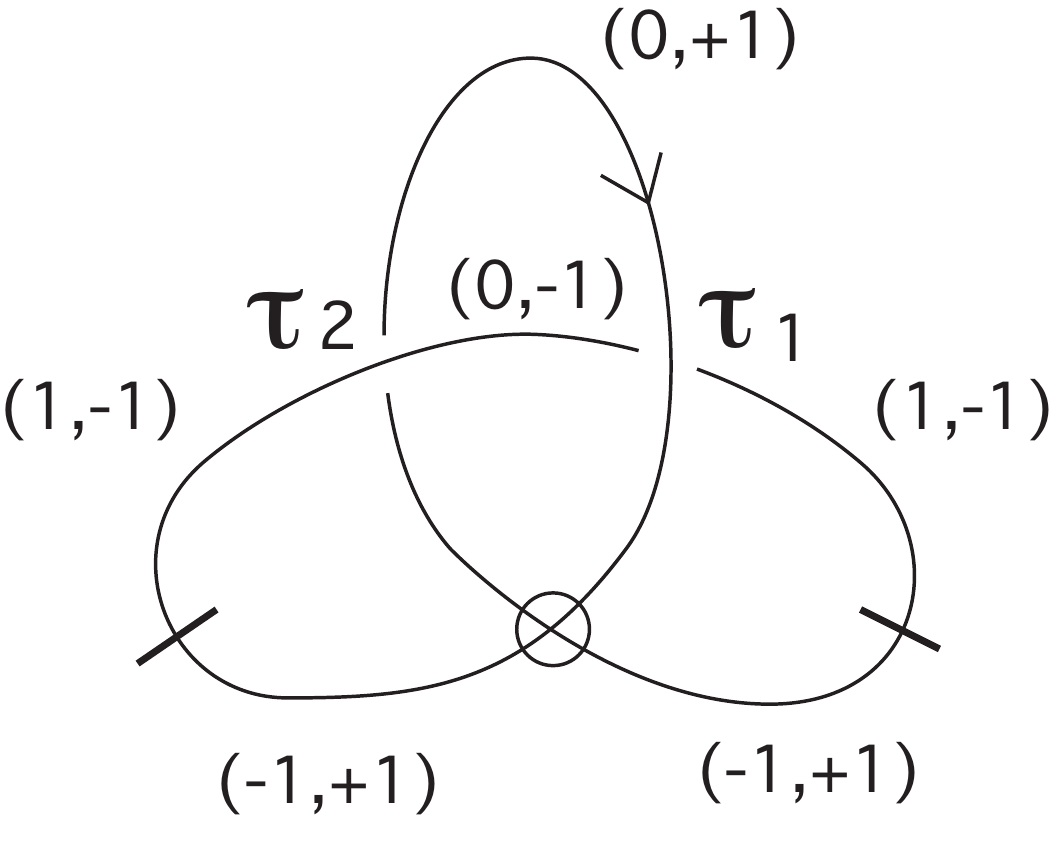}
\caption{A twisted intersection coloring mod $0$}
\label{fig:suri10-2B}
\end{center}
\end{figure}

\end{example}

\begin{theorem}
\label{thm:index-colored-twisted-link}
Let $D=D_1\cup\cdots\cup D_m$ and $D'=D'_1\cup\cdots\cup D'_m$ be twisted link diagrams such that 
$D_i$ and $D'_i$ are of even type for some $i$. 
Let $C_i$ and $C'_i$ be twisted intersection colorings mod  $d_i(D)$ and $d_i(D')$ on $D_i$ and $D'_i$, respectively.  
If $D$ with $C_i$ are equivalent to $D'$ with $C'_i$, then 
\begin{eqnarray}
\psi^{(D_i,C_i)}(t) &=& \psi^{(D'_i,C'_i)}(t), \label{eqn:thm-psi}\\ 
\psi_{+1}^{(D_i,C_i)}(t) &=& \psi_{+1}^{(D'_i,C'_i)}(t) \quad \mbox{and} \label{eqn:thm-psi0} \\ 
\psi_{-1}^{(D_i,C_i)}(t) &=& \psi_{-1}^{(D'_i,C'_i)}(t).  \label{eqn:thm-psi1}
\end{eqnarray} 
\end{theorem}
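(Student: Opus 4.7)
The plan is to verify the three identities one extended Reidemeister move at a time. Since equivalence of colored diagrams is generated by single moves with the coloring propagated uniquely across each, we may assume $D'$ is obtained from $D$ by a single extended Reidemeister move and $C'_i$ from $C_i$ by that same move. If the move is supported on components other than $D_i$, nothing on $D_i$ changes and the three polynomials are trivially preserved, so we may assume the move is local near $D_i$.

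The moves V1--V4, T1, T2 applied near $D_i$ introduce or remove only virtual crossings or bars; no classical self--crossing of $D_i$ is created or destroyed, and the labels on all semiarcs of $D_i$ that survive the move coincide with their original values. Thus the sets $T^{(D_i,C_i)}$, $T_{+1}^{(D_i,C_i)}$, $T_{-1}^{(D_i,C_i)}$ together with the signs and weights of all self--crossings of $D_i$ are preserved, and each polynomial is unchanged.

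The classical Reidemeister moves are handled by short local computations. For R1 the unique new self--crossing $\tau$ on $D_i$ lies in $T^{(D_i,C_i)}$ since no bar is involved, so $\psi_{+1}$ and $\psi_{-1}$ acquire no new term; tracing the label propagation around the kink (whose interior contains no other singularities) shows that the two first--factor labels defining $W_{(D_i,C_i)}(\tau)=a-b$ coincide, whence $W_{(D_i,C_i)}(\tau)=0$ and the contribution ${\rm sign}(\tau)(t^{0}-1)=0$ to $\psi$ also vanishes. For R2 the two new self--crossings carry opposite signs, lie in a common class among $\{T^{(D_i,C_i)},T_{+1}^{(D_i,C_i)},T_{-1}^{(D_i,C_i)}\}$ (determined by the second factors of the two strands of R2, which are unchanged across the move), and have equal weights; their signed contributions cancel in each of the three sums. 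For R3 the canonical bijection between the three crossings on the two sides of the move preserves sign, $T$--class, and weight crossing--by--crossing, as in a standard local verification.

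The delicate case is T3, in which a classical self--crossing $\tau$ of $D_i$ slides across one or more bars as in Figure~\ref{fig:suri-T1-T3}. Because the propagation rule $(a,e)\mapsto(-a,-e)$ at a bar can negate both factors of the labels on some semiarcs at $\tau$, the sign of $\tau$, the weight $W_{(D_i,C_i)}(\tau)$, and the class of $\tau$ among $T^{(D_i,C_i)}$, $T_{+1}^{(D_i,C_i)}$, $T_{-1}^{(D_i,C_i)}$ may each change under T3. The heart of the argument, and the main obstacle, is to show that these changes are correlated in exactly the right way: enumerating the finitely many subcases of T3 according to the bar configuration and the relative orientation of the two strands, and applying the bar rule carefully to the labels on every adjacent semiarc, one checks that the monomial (respectively monomial--minus--$1$) contributed by $\tau$ to each of $\psi_{+1}$, $\psi_{-1}$, $\psi$ coincides with the contribution of the corresponding crossing $\tau'$ to the corresponding polynomial of $(D'_i,C'_i)$, modulo $t^{d_i(D)}-1$. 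Combined with the preservation of all other self--crossings of $D_i$, this yields \eqref{eqn:thm-psi}--\eqref{eqn:thm-psi1}.
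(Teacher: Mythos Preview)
Your proposal is correct and follows the same route as the paper: reduce to a single extended Reidemeister move with the coloring carried along, then check case by case. Two small points of comparison: the paper streamlines the classical cases by passing to Polyak's generating set $\{{\rm R1a},{\rm R1b},{\rm R2a},{\rm R3a}\}$ for oriented Reidemeister moves, and for T3 the paper observes (via the local picture in Figure~\ref{fig:sr-6B}) that the sign, the $T$-class, and the weight of the crossing are each \emph{individually preserved}, not merely correlated---so your ``main obstacle'' dissolves on inspection and T3 is no harder than R3.
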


\begin{proof}
It is sufficient to consider a case where $D'$ with $C'_i$ are obtained from $D$ with $C_i$ by an extended Reidemeister move.  
Moreover, for ${\rm R1}, {\rm R2}, {\rm R3}$, it is sufficient to consider moves 
${\rm R1a}$, ${\rm R1b}$, ${\rm R2a}$, ${\rm R3a}$ depicted 
in Figure~\ref{fig:Polyak}, which are a generating set for oriented Reidemeister moves~\cite{Pol}.  

\begin{figure}[H]
\begin{tabular}{cc}
\begin{minipage}{0.18\hsize}
\begin{center}
\includegraphics[height=1.5cm]{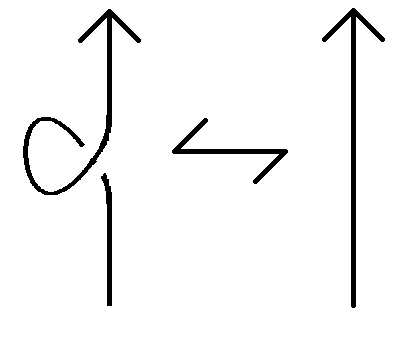}

R1a
\end{center}
\end{minipage}
\begin{minipage}{0.18\hsize}
\begin{center}
\includegraphics[height=1.5cm]{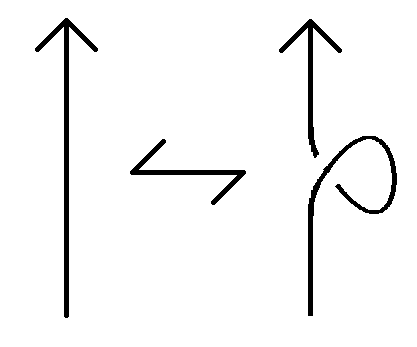}

R1b
\end{center}
\end{minipage}
\begin{minipage}{0.30\hsize}
\begin{center}
\includegraphics[height=1.5cm]{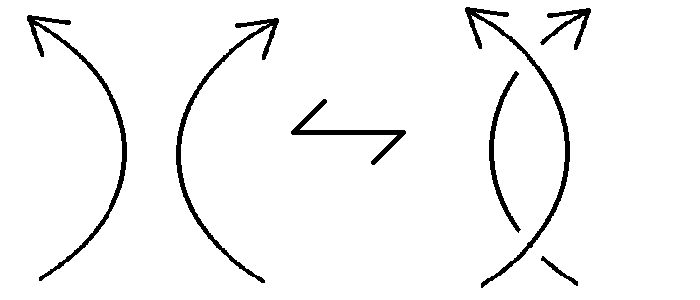}

R2a
\end{center}
\end{minipage}
\begin{minipage}{0.30\hsize}
\begin{center}
\includegraphics[height=1.5cm]{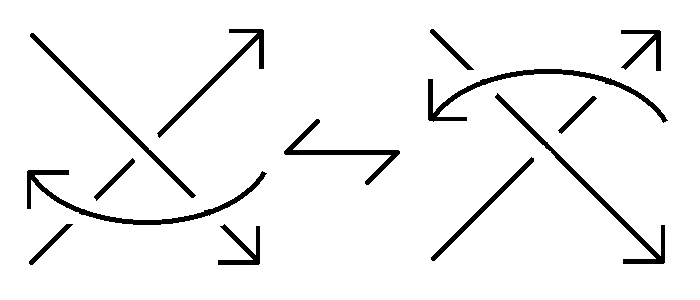}

R3a
\end{center}
\end{minipage}
\end{tabular}
\vspace{0.2cm}
\caption{A generating set of oriented Reidemeister moves}
\label{fig:Polyak}
\end{figure}

Suppose that the move is ${\rm R1a}$ or ${\rm R1b}$ on $D_k$ as 
depicted in Figure~\ref{fig:suri-R1a-c-R1b-c-R2a-c}.  
Since $\tau_1 \notin T_{+1}^{(D_i,C_i)} \cup T_{-1}^{(D_i,C_i)}$, 
Equalities~(\ref{eqn:thm-psi0}) and (\ref{eqn:thm-psi1}) are obvious.  
If $i \neq k$, then $\tau_1 \notin T^{(D_i,C_i)}$ and we have 
Equality~(\ref{eqn:thm-psi}).  
Assume $i=k$. The semiarcs around $\tau_1$ are labeled as in the figure, 
where we drop the second factor of the labels.  
Since $W_{(D_i,C_i)}(\tau_1)=0$, we have 
${\rm sign}(\tau_1)(t^{W_{(D_i,C_i)}(\tau_1)}-1) = 0$, which implies 
Equality~(\ref{eqn:thm-psi}).  

\begin{figure}[H]
\begin{tabular}{cc}

\begin{minipage}{0.27\hsize}
\begin{center}
\includegraphics[height=2.0cm]{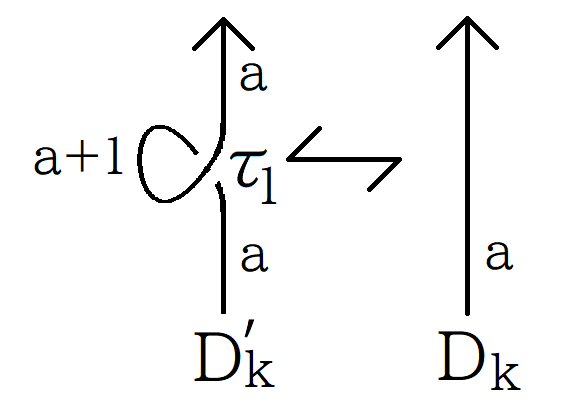}
\end{center}
\end{minipage}

\begin{minipage}{0.27\hsize}
\begin{center}
\includegraphics[height=2.0cm]{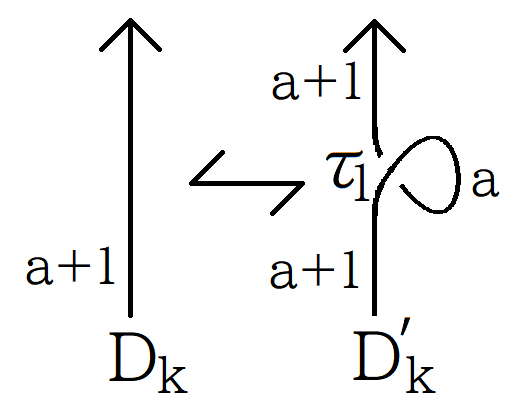}
\end{center}
\end{minipage}

\begin{minipage}{0.35\hsize}
\begin{center}
\includegraphics[height=2.3cm]{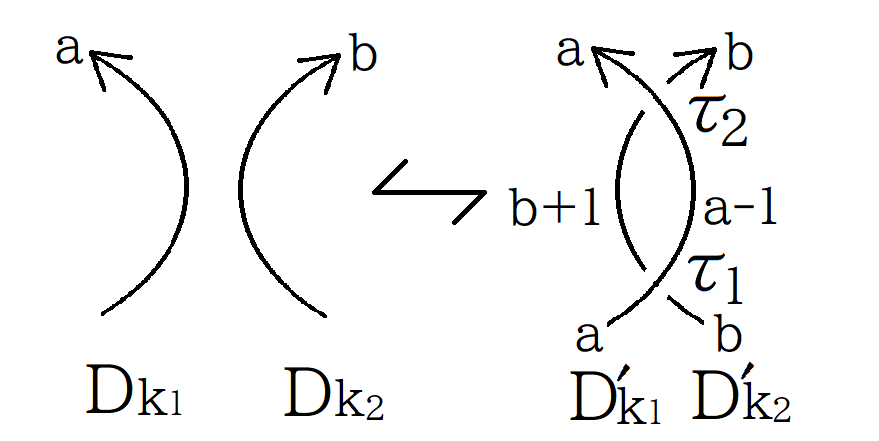}
\end{center}
\end{minipage}

\end{tabular}
\vspace{0.2cm}
\caption{R1a, R1b and R2a}
\label{fig:suri-R1a-c-R1b-c-R2a-c}
\end{figure}

Suppose that the move is ${\rm R2a}$, as depicted in Figure~\ref{fig:suri-R1a-c-R1b-c-R2a-c}. 
If $i \neq k_1$ or $i \neq k_2$, then $\tau_1$ and $\tau_2$ are not self-crossings of $D_i$. Thus we have Equalities~(\ref{eqn:thm-psi}), (\ref{eqn:thm-psi0}) and (\ref{eqn:thm-psi1}). 

Assume $i = k_1 = k_2$. The semiarcs around $\tau_1$ and $\tau_2$ are labeled as in the figure, 
where we drop the second factor of the labels.  
Both $\tau_1$ and $\tau_2$ belong to one of $T^{(D_i,C_i)}$, $T_{+1}^{(D_i,C_i)}$ and $T_{+1}^{(D_i,C_i)}$.  
Since ${\rm sign}(\tau_1)=1, {\rm sign}(\tau_2)=-1$ and 
$W_{(D_i,C_i)}(\tau_1)=W_{(D_i,C_i)}(\tau_2)$, we have Equalities~(\ref{eqn:thm-psi}), (\ref{eqn:thm-psi0}) and (\ref{eqn:thm-psi1}). 

Suppose that the move is ${\rm R3a}$, as depicted in Figure~\ref{fig:suri-R3a-c}. 
For each $k\in\{1,2,3\}$, 
if $\tau_k$ belongs to $T^{(D_i,C_i)}$, $T_{+1}^{(D_i,C_i)}$ or $T_{-1}^{(D_i,C_i)}$, then 
$\tau'_k$ belongs to $T^{(D'_i,C'_i)}$, $T_{+1}^{(D'_i,C'_i)}$ or $T_{-1}^{(D'_i,C'_i)}$, respectively.  
 Since  
${\rm sign}(\tau_k)={\rm sign}(\tau'_k)$ and $W_{(D_i,C_i)}(\tau_k)=W_{(D'_i,C'_i)}(\tau'_k)$, we have 
Equalities~(\ref{eqn:thm-psi}), (\ref{eqn:thm-psi0}) and (\ref{eqn:thm-psi1}).  
\begin{figure}[H]
\begin{center}
\includegraphics[height=2.7cm]{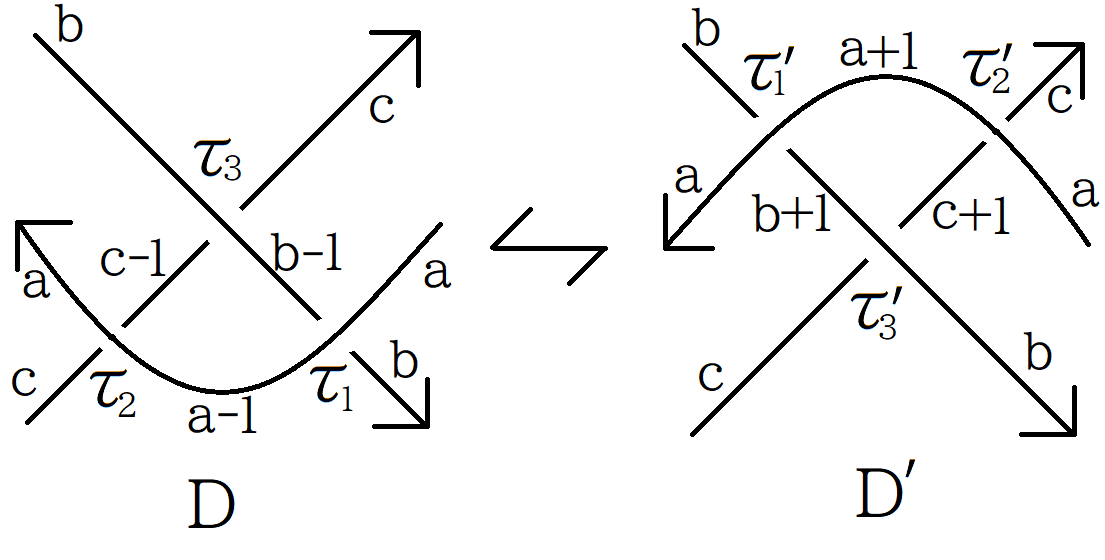}
\caption{R3a}
\label{fig:suri-R3a-c}
\end{center}
\end{figure}

When the move is ${\rm V1}, \dots, {\rm V4}$, ${\rm T1}$ or ${\rm T2}$, Equalities~(\ref{eqn:thm-psi}), (\ref{eqn:thm-psi0}) and (\ref{eqn:thm-psi1}) are obvious.   

Suppose that the move is ${\rm T3}$ as depicted in Figure~\ref{fig:sr-6B}. If $i \neq j$ or $i \neq k$, then Equalities~(\ref{eqn:thm-psi}), (\ref{eqn:thm-psi0}) and (\ref{eqn:thm-psi1}) are obvious.   
Now we assume that $i=j=k$. 
\begin{figure}[H]
\begin{center}
\includegraphics[height=3.2cm]{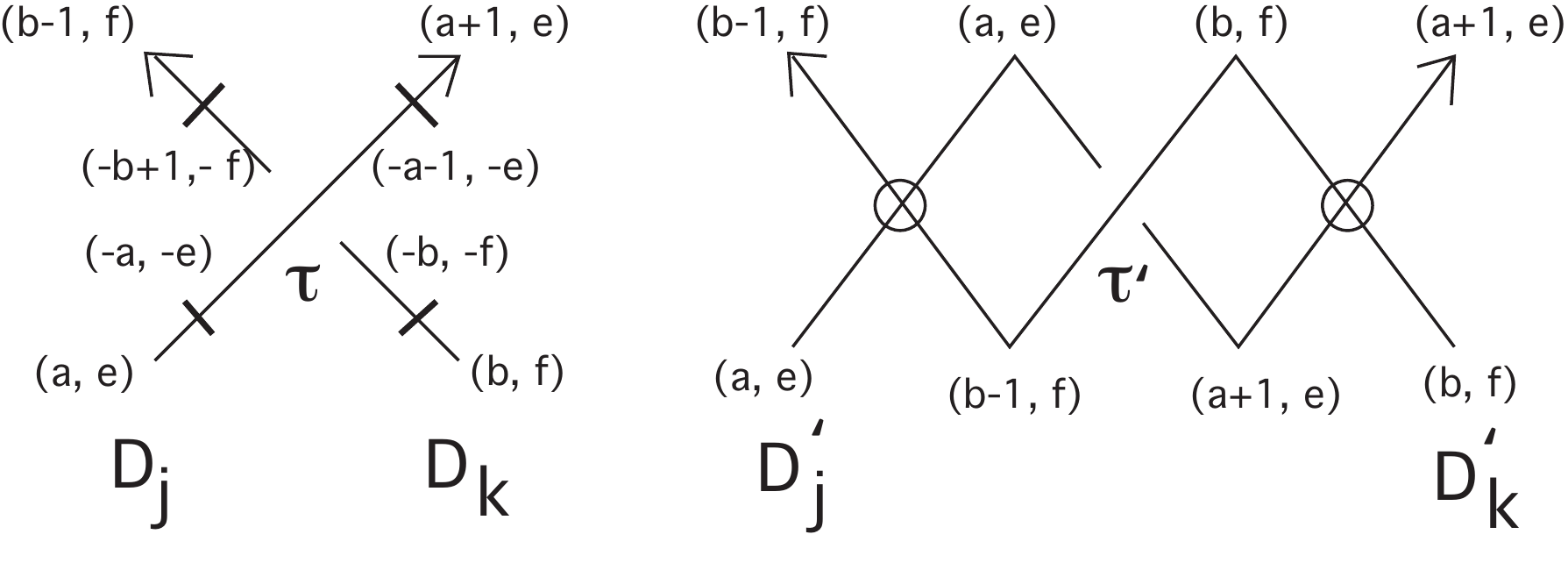}
\end{center}
\caption{}
\label{fig:sr-6B}
\end{figure}

If $\tau$ belongs to $T^{(D_i,C_i)}$, $T_{+1}^{(D_i,C_i)}$ or $T_{-1}^{(D_i,C_i)}$, then 
$\tau'$ belongs to $T^{(D'_i,C'_i)}$, $T_{+1}^{(D'_i,C'_i)}$ or $T_{-1}^{(D'_i,C'_i)}$, respectively.  
Since ${\rm sign}(\tau)= {\rm sign}(\tau')$ and 
$W_{(D_i,C_i)}(\tau)=W_{(D'_i,C'_i)}(\tau')$, we have 
Equalities~(\ref{eqn:thm-psi}), (\ref{eqn:thm-psi0}) and (\ref{eqn:thm-psi1}).  
The cases of other possible orientations on ${\rm T3}$ are shown similarly. 
\end{proof}

\begin{definition}
Let $f_1,f_2,g_1,g_2$ be elements of $\Z[t^{\pm 1}]/(t^n-1)$, where $n$ is a non-negative integer. 
We say that two pairs $(f_1,f_2)$ and $(g_1,g_2)$ are {\em $2$-equivalent} and denote it by  
$(f_1,f_2)\overset{(2)}{\equiv}(g_1,g_2)$ if  
$(f_1,f_2)=(g_1\cdot t^{2k},~g_2\cdot t^{-2k})$ or $(f_1,f_2)=(g_2\cdot t^{2k},~g_1\cdot t^{-2k})$
for some integer $k$.  
\end{definition}

The following theorem is one of our main results, which states that the index polynomial $\psi^{(D_i,C_i)}(t)$ and 
the $2$-equivalence class of $(\psi_{+1}^{(D_i,C_i)}(t),~\psi_{-1}^{(D_i,C_i)}(t))$ are invariants of a twisted link.

\begin{theorem}
\label{thm b}
Let $D=D_1\cup\cdots\cup D_m$ and $D'=D'_1\cup\cdots\cup D'_m$ be twisted link diagrams such that 
$D_i$ and $D'_i$ are of even type for some $i$. 
Let $C_i$ and $C'_i$ be twisted intersection colorings mod  $d_i(D)$ and $d_i(D')$ on $D_i$ and $D'_i$, respectively. 
If $D $ and $D'$ are equivalent, then we have 
\begin{eqnarray}
\psi^{(D_i,C_i)}(t) &=& \psi^{(D'_i,C'_i)}(t),  \\ 
(\psi_{+1}^{(D_i,C_i)}(t),~\psi_{-1}^{(D_i,C_i)}(t)) &\overset{(2)}{\equiv}& (\psi_{+1}^{(D'_i,C'_i)}(t),~\psi_{-1}^{(D'_i,C'_i)}(t)).
\end{eqnarray}
\end{theorem}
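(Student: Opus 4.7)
The plan is to deduce Theorem~\ref{thm b} from Theorem~\ref{thm:index-colored-twisted-link} by analyzing how the index polynomials depend on the coloring when the diagram is held fixed. Fix a finite sequence of extended Reidemeister moves realizing $D\overset{\rm t}{\sim}D'$ and transport $C_i$ through this sequence to obtain a twisted intersection coloring $\widetilde{C}_i$ on $D'_i$. By Proposition~\ref{prop:twisted_defect}, $d_i(D)=d_i(D')$, so $\widetilde{C}_i$ is again a coloring mod $d_i(D')$, and Theorem~\ref{thm:index-colored-twisted-link} gives $\psi^{(D_i,C_i)}=\psi^{(D'_i,\widetilde{C}_i)}$ and $\psi_{\pm 1}^{(D_i,C_i)}=\psi_{\pm 1}^{(D'_i,\widetilde{C}_i)}$. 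It therefore suffices to compare the two colorings $\widetilde{C}_i$ and $C'_i$ on the same diagram $D'_i$ and show that changing the coloring preserves $\psi$ and alters $(\psi_{+1},\psi_{-1})$ at most up to $2$-equivalence.

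I split the comparison into two subcases according to whether ${\rm pr}_2\circ\widetilde{C}_i$ agrees with ${\rm pr}_2\circ C'_i$ or is its negative. In the first subcase, Lemma~\ref{lem:e} supplies an integer $k$ with ${\rm pr}_1\circ\widetilde{C}_i(\alpha)-{\rm pr}_1\circ C'_i(\alpha)=e_\alpha k$ for every $\alpha\in\mathcal{A}(D'_i)$, where $e_\alpha={\rm pr}_2\circ\widetilde{C}_i(\alpha)\in\{\pm 1\}$. Since the subsets $T$, $T_{+1}$, $T_{-1}$ and the signs of crossings depend only on ${\rm pr}_2$, they coincide for the two colorings. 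At a self-crossing $\tau$ involving semiarcs with second factors $e_\alpha,e_\beta$, the weight $W=a-b$ changes by $W'-W=-(e_\alpha-e_\beta)k$; this vanishes on $T$, and by direct inspection of Figure~\ref{fig:suri-wB} it equals $-2k$ on one of $T_{\pm 1}$ and $+2k$ on the other. Consequently $\psi$ is unchanged and $(\psi_{+1}^{(D'_i,C'_i)},\psi_{-1}^{(D'_i,C'_i)})=(t^{2k'}\psi_{+1}^{(D'_i,\widetilde{C}_i)},\,t^{-2k'}\psi_{-1}^{(D'_i,\widetilde{C}_i)})$ for some $k'=\pm k$, a $2$-equivalence of the first type.

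For the second subcase, introduce the \emph{second-factor flip} $\overline{\widetilde{C}_i}$ defined by $\overline{\widetilde{C}_i}(\alpha):=(a,-e)$ whenever $\widetilde{C}_i(\alpha)=(a,e)$. The local rules $(a,e)\to(a\pm 1,e)$ at classical crossings and $(a,e)\to(-a,-e)$ at bars are both invariant under $e\mapsto -e$, so $\overline{\widetilde{C}_i}$ is again a twisted intersection coloring mod $d_i(D')$ on $D'_i$. The weights $W$ and crossing signs are untouched, while the three subsets transform as $T\mapsto T$ and $T_{+1}\leftrightarrow T_{-1}$; hence $\psi$ is preserved and $(\psi_{+1},\psi_{-1})$ is swapped. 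Since ${\rm pr}_2\circ\overline{\widetilde{C}_i}={\rm pr}_2\circ C'_i$, applying the first subcase to $\overline{\widetilde{C}_i}$ and $C'_i$ and composing with this swap yields the second (swap-plus-shift) form of $2$-equivalence, finishing the proof. The main technical point is the sign bookkeeping in the first subcase: one must check that the shift of $W$ takes opposite signs on $T_{+1}$ and on $T_{-1}$, so that the pair transforms as $(f,g)\mapsto (t^{2k'}f,\,t^{-2k'}g)$ rather than $(t^{2k'}f,\,t^{2k'}g)$; this is the only place where the geometry of Figure~\ref{fig:suri-wB} genuinely enters.
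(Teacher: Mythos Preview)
Your proof is correct and follows essentially the same strategy as the paper: transport the coloring along the move sequence via Theorem~\ref{thm:index-colored-twisted-link}, then compare two colorings on the same component via the dichotomy on ${\rm pr}_2$. Your first subcase is exactly Lemma~\ref{lem:f} and your second-factor flip plus reduction to the first subcase is exactly Lemma~\ref{lem:g} composed with Lemma~\ref{lem:f}; the paper merely packages these as separate lemmas rather than inline.
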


\begin{proof}
It follows from Theorem~\ref{thm:index-colored-twisted-link} and the following lemmas (Lemmas~\ref{lem:g} and \ref{lem:f}). 
\end{proof}

\begin{lemma}
\label{lem:g}

$D=D_1\cup\cdots\cup D_m$ be a twisted link diagram.  Suppose that $D_i$ is of even type.  Let 
$C_i$ and $C'_i$ be twisted intersection colorings mod $d_i(D)$ on $D_i$. 
Suppose that 
${\rm pr}_2\circ C_i = - {\rm pr}_2\circ C'_i$ and 
${\rm pr}_1\circ C_i = {\rm pr}_1\circ C'_i$.  Then 
\begin{eqnarray}
\psi^{(D_i,C_i)}(t) &=& \psi^{(D_i,C'_i)}(t), \\
\psi_{+1}^{(D_i,C_i)}(t) &=& \psi_{-1}^{(D_i,C'_i)}(t), \\ 
\psi_{-1}^{(D_i,C_i)}(t) &=& \psi_{+1}^{(D_i,C'_i)}(t).
\end{eqnarray}
\end{lemma}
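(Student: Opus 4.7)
The plan is to compare the two colorings crossing by crossing, using the definitions of the three sets $T^{(D_i,C_i)}$, $T_{+1}^{(D_i,C_i)}$, $T_{-1}^{(D_i,C_i)}$ and of the weight $W_{(D_i,C_i)}(\tau)$ together with the fact that ${\rm sign}(\tau)$ depends only on the underlying diagram.

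First I would check that $C'_i$ is a well-defined twisted intersection coloring mod $d_i(D)$ on $D_i$. At a classical crossing on $D_i$, the rule in Figure~\ref{fig:suri-C1B} changes $(a,e)$ into $(a\pm 1,e)$; since $C_i$ satisfies this and ${\rm pr}_1\circ C'_i={\rm pr}_1\circ C_i$ while ${\rm pr}_2\circ C'_i=-{\rm pr}_2\circ C_i$, negating the second factor on every semiarc preserves the rule. At a bar, the rule sends $(a,e)$ to $(-a,-e)$, which is manifestly symmetric under $(a,e)\mapsto(a,-e)$.

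Next, for each self-crossing $\tau$ of $D_i$, the labels around $\tau$ under $C'_i$ are obtained from those under $C_i$ by negating the second factors while keeping the first factors intact. Therefore
\[
W_{(D_i,C'_i)}(\tau)=W_{(D_i,C_i)}(\tau)\in\Z_{d_i(D)},
\]
and the trichotomy on crossings reorganizes as follows: if the two over/under second factors agree under $C_i$, they still agree under $C'_i$, so $T^{(D_i,C_i)}=T^{(D_i,C'_i)}$; if under $C_i$ the over-arc carries $+1$ and the under-arc carries $-1$, then under $C'_i$ the over-arc carries $-1$ and the under-arc carries $+1$, so $T_{+1}^{(D_i,C_i)}=T_{-1}^{(D_i,C'_i)}$; and symmetrically $T_{-1}^{(D_i,C_i)}=T_{+1}^{(D_i,C'_i)}$.

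Finally, since ${\rm sign}(\tau)$ depends only on the diagram and not on the coloring, plugging the above three equalities of sets and equalities of weights into the formulas~(1)--(3) of Definition~\ref{t ind} yields the three desired identities term by term. There is no real obstacle here; the argument is purely a bookkeeping check that negating the $\{\pm1\}$-factor of the coloring amounts to swapping the labels of the sets $T_{+1}$ and $T_{-1}$ while preserving all weights and signs.
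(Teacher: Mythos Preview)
Your argument is correct and follows the same route as the paper: from ${\rm pr}_2\circ C_i=-{\rm pr}_2\circ C'_i$ one gets $T^{(D_i,C_i)}=T^{(D_i,C'_i)}$, $T_{+1}^{(D_i,C_i)}=T_{-1}^{(D_i,C'_i)}$, $T_{-1}^{(D_i,C_i)}=T_{+1}^{(D_i,C'_i)}$, and from ${\rm pr}_1\circ C_i={\rm pr}_1\circ C'_i$ one gets equality of all weights, whence the identities. Your first paragraph verifying that $C'_i$ is a coloring is harmless but unnecessary, since the hypothesis already assumes $C'_i$ is a twisted intersection coloring.
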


\begin{proof}
Since ${\rm pr}_2\circ C_i = - {\rm pr}_2\circ C'_i$, we have 
$T^{(D_i,C_i)}=T^{(D_i,C'_i)}$, $T_{+1}^{(D_i,C_i)}=T_{-1}^{(D_i,C'_i)}$ and  $T_{-1}^{(D_i,C_i)}=T_{+1}^{(D_i,C'_i)}$.  
Since ${\rm pr}_1\circ C_i = {\rm pr}_1\circ C'_i$, for any self-crossing $\tau$ of $D_i$,  
$W_{(D_i,C_i)}(\tau) = W_{(D_i,C'_i)}(\tau)$.  
Thus we obtain the assertion. 
\end{proof}

\begin{lemma}
\label{lem:f}
Let 
$D=D_1\cup\cdots\cup D_m$ be a twisted link diagram.  Suppose that $D_i$ is of even type.  Let 
$C_i$ and $C'_i$ be twisted intersection colorings mod $d_i(D)$  on $D_i$. 
Suppose that 
${\rm pr}_2\circ C_i ={\rm pr}_2\circ C'_i $.  
Let $k$ be an integer mod  $d_i(D)$ as in Lemma~\ref{lem:e}.  Then 
\begin{eqnarray}
\psi^{(D_i,C_i)}(t) &=& \psi^{(D_i,C'_i)}(t), \\ 
\psi_{+1}^{(D_i,C_i)}(t) &=& \psi_{+1}^{(D_i,C'_i)}(t)\cdot t^{2k}, \\ 
\psi_{-1}^{(D_i,C_i)}(t) &=& \psi_{-1}^{(D_i,C'_i)}(t)\cdot t^{-2k}.
\end{eqnarray}
\end{lemma}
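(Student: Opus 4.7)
The plan is to use Lemma~\ref{lem:e} to relate the first coordinates of $C_i$ and $C'_i$ on each semiarc of $D_i$, and then trace how the weight $W_{(D_i,\,\cdot\,)}(\tau)$ of each self-crossing $\tau$ transforms. Since ${\rm pr}_2\circ C_i = {\rm pr}_2\circ C'_i$, membership in $T^{(D_i,C_i)}$, $T_{+1}^{(D_i,C_i)}$, $T_{-1}^{(D_i,C_i)}$ depends only on the shared second-factor assignment; hence these three sets coincide with their $C'_i$-counterparts, and the only effect of replacing $C_i$ by $C'_i$ can be a shift in the exponents.

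For a self-crossing $\tau$, let $\alpha$ and $\beta$ be the two semiarcs whose first coordinates give the values $a$ and $b$ in Figure~\ref{fig:suri-wB}, so that $W_{(D_i,C_i)}(\tau) = {\rm pr}_1\circ C_i(\alpha) - {\rm pr}_1\circ C_i(\beta)$ and analogously for $C'_i$. By Lemma~\ref{lem:e}, the difference ${\rm pr}_1\circ C_i(\gamma) - {\rm pr}_1\circ C'_i(\gamma)$ equals $+k$ if ${\rm pr}_2\circ C_i(\gamma) = +1$ and $-k$ if ${\rm pr}_2\circ C_i(\gamma) = -1$. A direct inspection of Figure~\ref{fig:suri-wB} shows that $\alpha$ lies on the over-strand at $\tau$ and $\beta$ on the under-strand, so the second factors of $\alpha$ and $\beta$ are exactly the data recorded by the partition into $T$, $T_{+1}$, $T_{-1}$.

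Three cases then arise. If $\tau \in T^{(D_i,C_i)}$, then $\alpha$ and $\beta$ share a common second factor, the two $\pm k$ shifts cancel, and $W_{(D_i,C_i)}(\tau) = W_{(D_i,C'_i)}(\tau)$; substituting into Definition~\ref{t ind} yields $\psi^{(D_i,C_i)}(t) = \psi^{(D_i,C'_i)}(t)$. If $\tau \in T_{+1}^{(D_i,C_i)}$, then $\alpha$ has second factor $+1$ and $\beta$ has $-1$, so $W_{(D_i,C_i)}(\tau) - W_{(D_i,C'_i)}(\tau) = k - (-k) = 2k$, and factoring $t^{2k}$ out of the sum defining $\psi_{+1}$ gives the second claimed equality. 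The case $\tau \in T_{-1}^{(D_i,C_i)}$ is symmetric and produces the factor $t^{-2k}$.

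The only delicate point is nailing down the convention of Figure~\ref{fig:suri-wB}: one must verify that the labels $a$ and $b$ are drawn from arcs on opposite strands (one over, one under) at $\tau$, since if they lay on the same strand, the $\pm k$ shifts would always cancel and the $t^{\pm 2k}$ factors would never appear. Once this geometric reading is pinned down, the rest of the proof is the direct case analysis sketched above.
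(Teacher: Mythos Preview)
Your proof is correct and follows essentially the same approach as the paper: both observe that ${\rm pr}_2\circ C_i = {\rm pr}_2\circ C'_i$ forces the three sets $T$, $T_{+1}$, $T_{-1}$ to coincide, then invoke Lemma~\ref{lem:e} to compute the shift in $W_{(D_i,\,\cdot\,)}(\tau)$ in each of the three cases, obtaining shifts of $0$, $+2k$, and $-2k$ respectively. Your explicit remark about the over/under convention in Figure~\ref{fig:suri-wB} is a point the paper handles by drawing a companion figure; the content is identical.
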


\begin{proof}
Since  
${\rm pr}_2\circ C_i ={\rm pr}_2\circ C'_i$, we have 
$T^{(D_i,C_i)}=T^{(D_i,C'_i)}$, 
$T_{+1}^{(D_i,C_i)}=T_{+1}^{(D_i,C'_i)}$ and 
$T_{-1}^{(D_i,C_i)}=T_{-1}^{(D_i,C'_i)}$.
By Lemma~\ref{lem:e}, for any $\gamma \in\mathcal{A}(D_i)$, 
\begin{itemize}
\item 
${\rm pr}_2\circ C_i(\gamma)=+1$ implies ${\rm pr}_1\circ C_i(\gamma)={\rm pr}_1\circ C'_i(\gamma)+k$, and 
\item
${\rm pr}_2\circ C_i(\gamma)=-1$ implies ${\rm pr}_1\circ C_i(\gamma)={\rm pr}_1\circ C'_i(\gamma)-k$.
\end{itemize}

Let $\tau$ be a self-crossing of $D_i$ and let $\alpha$ and $\beta$ be semiarcs as in Figure~\ref{fig:sr-w3B}. 

\begin{figure}[H]
\begin{center}
\includegraphics[height=1.8cm]{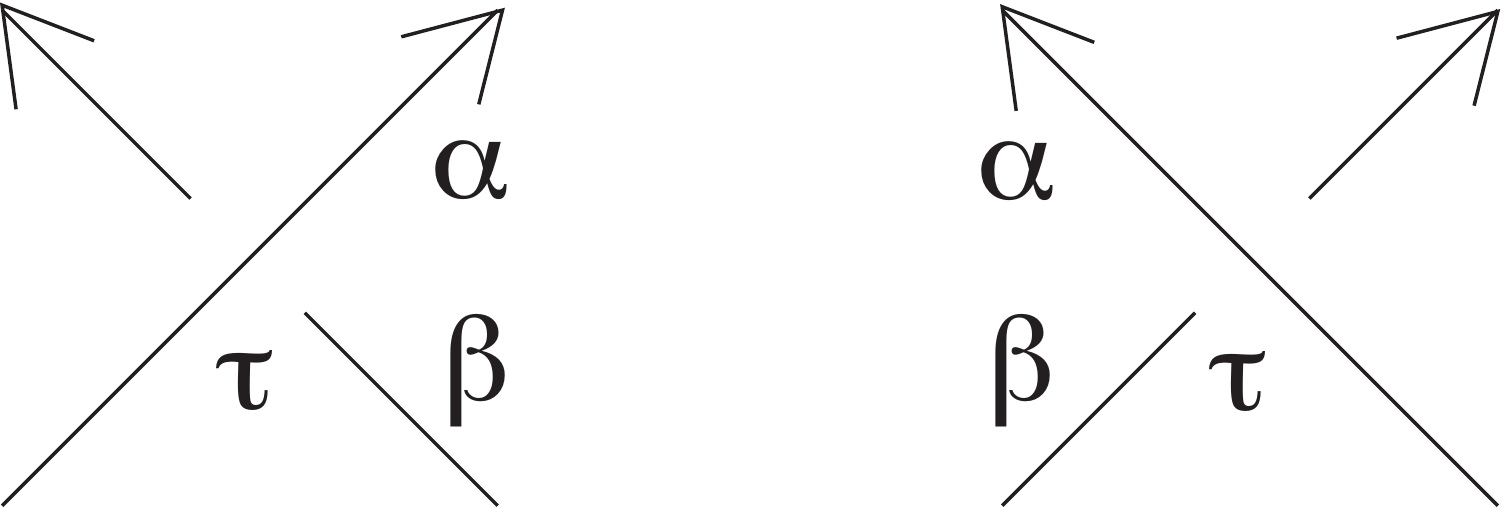}
\caption{}
\label{fig:sr-w3B}
\end{center}
\end{figure}

(1) Suppose that $\tau\in T^{(D_i,C_i)}$.  Then  
$({\rm pr}_2\circ C_i)(\alpha)=({\rm pr}_2\circ C_i)(\beta)$.  
Thus we have 
\begin{align*}
W_{(D_i,C_i)}(\tau) &= {\rm pr}_1\circ C_i(\alpha)-{\rm pr}_1\circ C_i(\beta) \\
                             &= ({\rm pr}_1\circ C'_i(\alpha) \pm k)-({\rm pr}_1\circ C'_i(\beta) \pm k)\\
                             &= {\rm pr}_1\circ C'_i(\alpha)-{\rm pr}_1\circ C'_i(\beta)\\
                             &= W_{(D_i,C'_i)}(\tau)
\end{align*}
This implies $\psi^{(D_i,C_i)}(t)=\psi^{(D_i,C'_i)}(t)$.

(2) Suppose that  $\tau\in T_{+1}^{(D_i,C_i)}$. Then 
$({\rm pr}_2\circ C_i)(\alpha)=+1$ and $({\rm pr}_2\circ C_i)(\beta)=-1$. 
Thus we have 
\begin{align*}
W_{(D_i,C_i)}(\tau) &= {\rm pr}_1\circ C_i(\alpha)-{\rm pr}_1\circ C_i(\beta) \\
                             &= ({\rm pr}_1\circ C'_i(\alpha)+k)-({\rm pr}_1\circ C'_i(\beta)-k)\\
                             &= {\rm pr}_1\circ C'_i(\alpha)-{\rm pr}_1\circ C'_i(\beta)+2k\\
                             &= W_{(D_i,C'_i)}(\tau)+2k.  
\end{align*}
This implies 
$\psi_{+1}^{(D_i,C_i)}(t) =  \psi_{+1}^{(D_i,C'_i)}(t)\cdot t^{2k}$.

(3) Suppose that $\tau\in T_{-1}^{(D_i,C_i)}$. Then 
$({\rm pr}_2\circ C_i)(\alpha)=-1$ and $({\rm pr}_2\circ C_i)(\beta)=+1$.  Thus we have 
\begin{align*}
W_{(D_i,C_i)}(\tau) &= {\rm pr}_1\circ C_i(\alpha)-{\rm pr}_1\circ C_i(\beta) \\
                             &= ({\rm pr}_1\circ C'_i(\alpha)-k)-({\rm pr}_1\circ C'_i(\beta)+k)\\
                             &= {\rm pr}_1\circ C'_i(\alpha)-{\rm pr}_1\circ C'_i(\beta)-2k\\
                             &= W_{(D_i,C'_i)}(\tau)-2k
\end{align*}
This implies  
$\psi_{-1}^{(D_i,C_i)}(t) = \psi_{-1}^{(D_i,C'_i)}(t)\cdot t^{-2k}$. 
\end{proof}

\begin{example}
Let $D=D_1$ and $D'=D'_1$ be twisted link diagrams depicted in Figure~\ref{fig:sr-46-47}. 
Our invariant can distinguish them as follows: 
$d_1(D)=d_1(D')=2$.  For some twisted intersection colorings $C_1$ and $C'_1$ mod $2$ on $D_1$ and $D'_1$, we have  
$(\psi_{+1}^{(D_1,C_1)}(t),~\psi_{-1}^{(D_1,C_1)}(t))=(0,1)$ and $(\psi_{+1}^{(C'_1,D'_1)}(t),~\psi_{-1}^{(C'_1,D'_1)}(t))=(0,-1)$ 
in $( \Z[t^{\pm 1}]/(t^2-1) )^2$.  
Since $(0,1)\overset{(2)}{\not\equiv}(0,-1)$, we see that $D$ and $D'$ are not equivalent.

\begin{figure}[H]
\begin{tabular}{cc}
\begin{minipage}{0.40\hsize}
\begin{center}
\includegraphics[height=2.0cm]{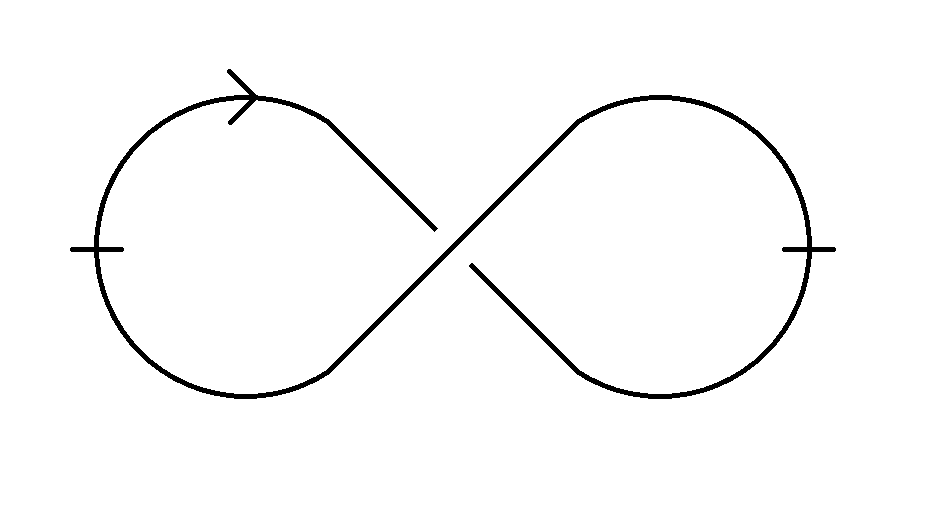}

$D=D_1$
\end{center}
\end{minipage}

\begin{minipage}{0.40\hsize}
\begin{center}
\includegraphics[height=2.0cm]{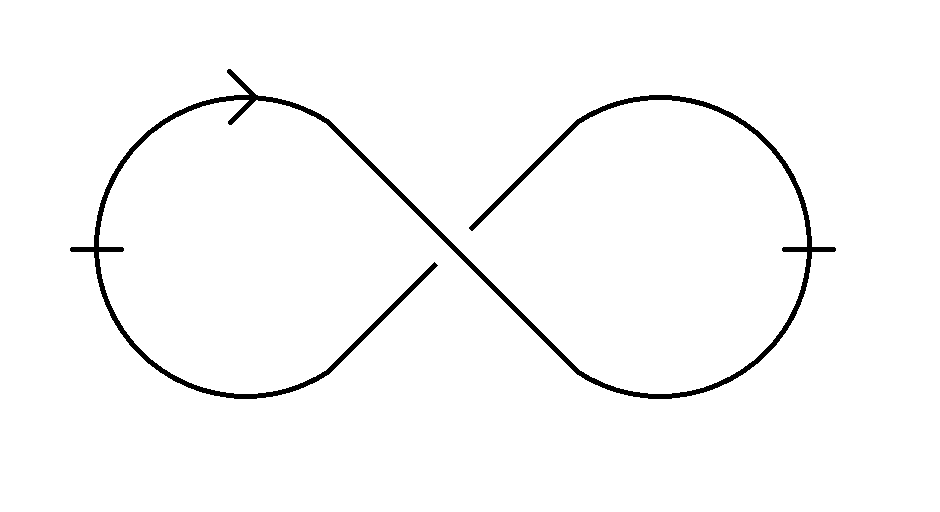}

$D'=D'_1$
\end{center}
\end{minipage}
\end{tabular}
\caption{}
\label{fig:sr-46-47}
\end{figure}

\end{example}

N.~Kamada \cite{K13} defined an invariant of a twisted link which is 
called the index polynomial invariant and denoted by $\widetilde{Q}_D(t)$. 
The invariant $\widetilde{Q}_D(t)$ does not distinguish the diagrams $D$ and $D'$  
in Figure~\ref{fig:sr-46-47}: $\widetilde{Q}_D(t)=\widetilde{Q}_{D'}(t)=0$. 
Thus our invariant, the $2$-equivalence class of $(\psi_{+1}, \psi_{-1})$, is different from hers.

\section{Double coverings} 
\label{sect:double}

The double covering of a twisted link was originally defined in \cite{KK16} using orientation double coverings of surfaces.  
Roughly speaking, a twisted link diagram $D$ is associated with a link diagram $D_{\Sigma}$ on a compact (or closed) surface $\Sigma$.  
Consider an orientation double covering $p: \widetilde{\Sigma} \to \Sigma$ of the surface $\Sigma$, and let $\widetilde{D_{\Sigma}}$ be the link diagram on $\widetilde{\Sigma}$ which is the pull back of $D_{\Sigma}$.  The link diagram $\widetilde{D_{\Sigma}}$ on $\widetilde{\Sigma}$ is associated with a virtual link diagram $\widetilde{D}$, which is called a {\em double covering diagram} of $D$.  A double covering diagram $\widetilde{D}$ of $D$ 
is uniquely determined up to moves ${\rm V1}, \dots, {V4}$. 

For a twisted link diagram $D=D_1\cup\cdots\cup D_m$, we denote by 
$\widetilde{D}=\widetilde{D}_1\cup\cdots\cup\widetilde{D}_m$ 
a double covering diagram of $D$ such that $\widetilde{D}_i$ is the preimage of $D_i$. 
When $D_i$ is of odd type, $\widetilde{D}_i$ is a single component of $\widetilde{D}$.  
When $D_i$ is of even type, $\widetilde{D}_i$ consists of two components of $\widetilde{D}$.  Thus, in general, the virtual link diagram 
$\widetilde{D}=\widetilde{D}_1\cup\cdots\cup\widetilde{D}_m$  is not ordered but it is {\it partitioned} 
 with $\{1, \dots, m\}$, i.e., each component is labeled by integers $1,\dots, m$.   

\begin{theorem}[\cite{KK16}]
\label{thm:doubleA}
If two twisted link diagrams $D$ and $D'$ are equivalent as (ordered) twisted links, then their double covering diagrams $\widetilde{D}$  and $\widetilde{D}'$ are equivalent as (partitioned) virtual links.  
\end{theorem}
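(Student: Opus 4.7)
The plan is to reduce to the case where $D'$ is obtained from $D$ by a single extended Reidemeister move and then verify each move locally via the surface realization recalled at the start of this section. A twisted link diagram $D$ determines (up to stable equivalence) a link $L_\Sigma$ in a thickened surface $\Sigma\times[-1,1]$, possibly nonorientable, and $\widetilde D$ is obtained by pulling back $L_\Sigma$ along the orientation double covering $p\colon\widetilde\Sigma\to\Sigma$. Since each extended Reidemeister move is supported in a small disk $U\subset\R^2$, the diagrams $D$ and $D'$ agree outside $U$, and it suffices to check that $p^{-1}(N(U_\Sigma))\subset\widetilde\Sigma$ contains a sequence of generalized Reidemeister moves transforming the pre-image of one side of the move to the pre-image of the other.

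For the moves $\mathrm{R1},\mathrm{R2},\mathrm{R3}$ and $\mathrm{V1},\dots,\mathrm{V4}$ there are no bars involved, so $N(U_\Sigma)$ can be taken to be an embedded orientable disk in $\Sigma$. Its preimage under $p$ is the disjoint union of two disks, and the move lifts to two simultaneous copies of the same move in those disks. Projecting back to a planar virtual diagram, we obtain two applications of the corresponding generalized Reidemeister move on $\widetilde D$, hence $\widetilde D\overset{\rm v}{\sim}\widetilde{D'}$.

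For $\mathrm{T1},\mathrm{T2},\mathrm{T3}$ the neighborhood $N(U_\Sigma)$ is nonorientable: a bar corresponds in $\Sigma$ to an arc traversing a crosscap, and its neighborhood is a M\"obius band. The preimage $p^{-1}(N(U_\Sigma))$ is therefore an annulus, and I would draw the lifts of the two sides of each move explicitly. For $\mathrm{T2}$ the two bars cancel in $\Sigma$ so the lifted strand is isotopic to a trivial arc in the annulus, and planarization yields at worst $\mathrm{V1},\mathrm{V2}$ moves. For $\mathrm{T1}$ the lift is an $\mathrm{R1}$ loop in one sheet of the annulus, which is removed by a single classical $\mathrm{R1}$. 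For $\mathrm{T3}$ the bar swaps the two sheets, so a classical crossing on one side of the move lifts to a single crossing in $\widetilde\Sigma$ whose sheet labels are interchanged relative to the other side; checking that the two presentations differ by $\mathrm{V2},\mathrm{V3}$ and an isotopy completes the case. The final remark is that in all cases each lifted component of $\widetilde D_i$ receives the same label $i$ inherited from $D_i$, so the partition of $\widetilde D$ is preserved and the conclusion holds for partitioned virtual links.

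The main obstacle will be the $\mathrm{T3}$ case, since it is the only move that mixes bars with classical crossings; the sheet-swap induced by the bar forces careful bookkeeping of crossing signs and of the identification of strands in the two sheets of the annulus, so that after re-planarizing one indeed lands in the same partitioned virtual link up to $\overset{\rm v}{\sim}$. Once this single case is pinned down by drawing the four possible orientation configurations, the rest of the verification is a direct inspection of local pictures.
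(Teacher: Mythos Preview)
The paper does not give its own proof of this statement: Theorem~\ref{thm:doubleA} is quoted from \cite{KK16} and left unproved here, so there is no in-paper argument to compare against. Your strategy---reduce to a single extended Reidemeister move, lift the local picture through the orientation double cover $p\colon\widetilde\Sigma\to\Sigma$, and check that the two lifts differ by generalized Reidemeister moves---is exactly the natural one, and is essentially how \cite{KK16} proceeds (there the verification is carried out using the explicit planar construction of $\widetilde D$ recalled in this section, rather than abstractly on $\widetilde\Sigma$).

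One caution about your sketch: your descriptions of $\mathrm{T1}$ and $\mathrm{T2}$ appear to be transposed or garbled. The move $\mathrm{T1}$ cancels two adjacent bars on a single strand; in the double cover this produces a pair of nested virtual detours that simplify by $\mathrm{V2}$ moves, not an $\mathrm{R1}$ loop. The move $\mathrm{T2}$ slides bars past a virtual crossing and involves two strands; its lift is again handled by virtual moves, not by ``two bars cancel in $\Sigma$.'' These are bookkeeping slips rather than conceptual errors, and once you redraw the local pictures using the explicit recipe in Figure~\ref{fig:suri19} the correct sequences of $\mathrm{V}$-moves (and, for $\mathrm{T3}$, additional $\mathrm{R}$-moves) become visible. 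Your identification of $\mathrm{T3}$ as the delicate case is correct.
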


A practical method of constructing a double covering diagram of a twisted link diagram is introduced in \cite{KK16}.  Let us recall the method here.   

Let $D$ be a twisted link diagram. 

(1) Move $D$ by an isotopy of $\R^2$ into the half plane $\{(x,y)\in\R^2~|~x>0\}$ such that 
the $y$-coordinates of the classical crossings, virtual crossings and bars are all distinct, and  
these $y$-coordinates avoid the critical values of the map from the underlying immersed loops of $D$ in $\R^2$ to the $y$-axis.  

We denote by $s(D)$ the twisted link diagram in $\{(x,y)\in\R^2~|~x<0\}$ which is obtained from $D$ by applying the reflection along the $y$-axis and then switching over/under information at all classical crossings. 
The union $s(D) \cup D$ is a twisted link diagram in $\R^2$. See the left of Figure~\ref{fig:suri-20-arrow-17}. 

\begin{figure}[H]
\begin{tabular}{cc}

\begin{minipage}{0.40\hsize}
\begin{center}
\includegraphics[height=3.7cm]{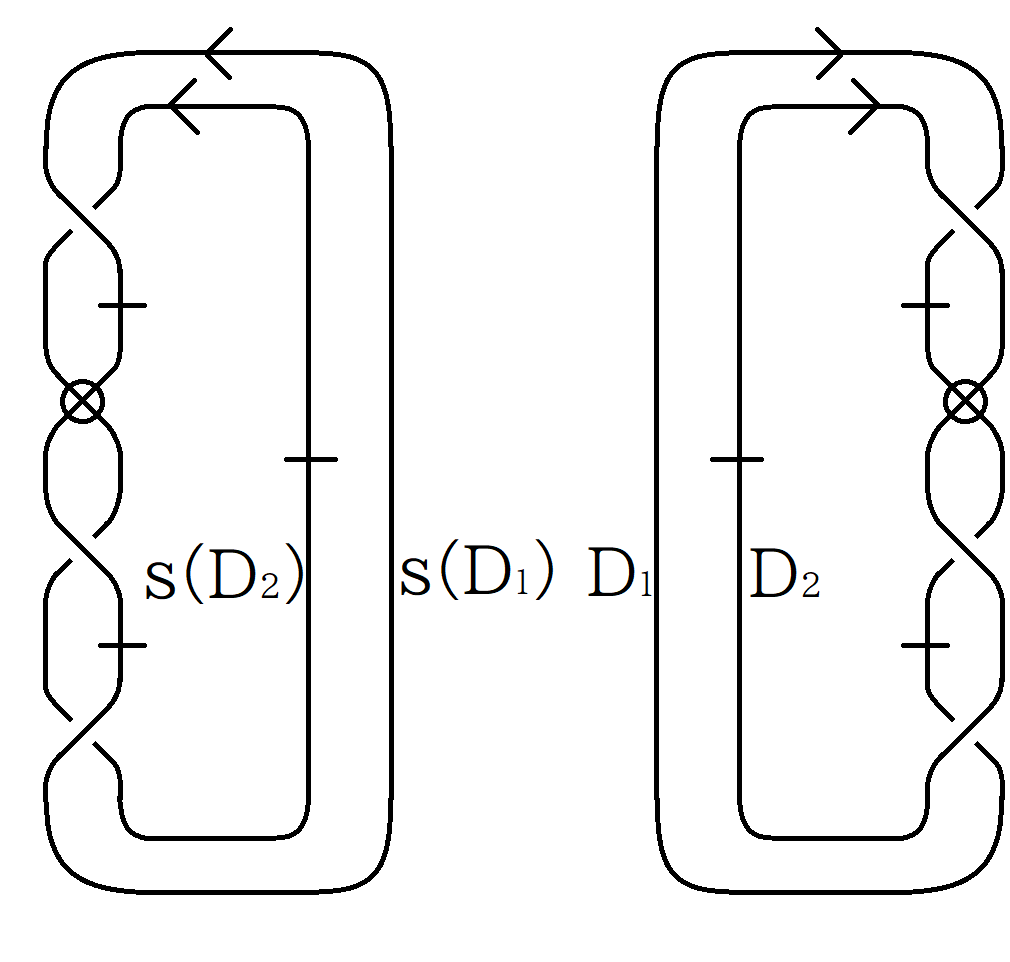}

$s(D) \cup D$
\end{center}

\end{minipage}
\begin{minipage}{0.20\hsize}
\begin{center}
\includegraphics[height=1.0cm]{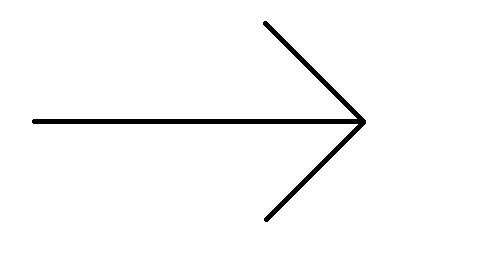}
\end{center}
\end{minipage}

\begin{minipage}{0.40\hsize}
\begin{center}
\includegraphics[height=3.7cm]{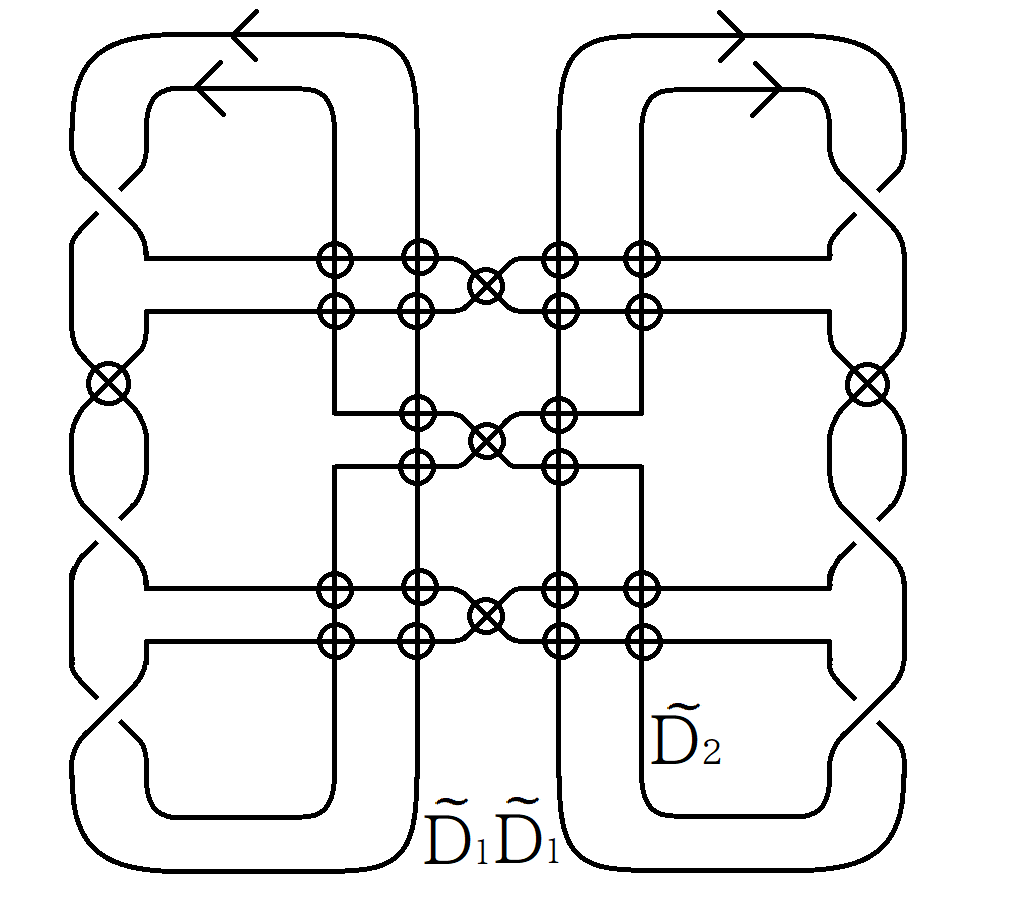}

$\widetilde{D}=\widetilde{D}_1\cup\widetilde{D}_2$
\end{center}
\end{minipage}
\end{tabular}
\caption{}
\label{fig:suri-20-arrow-17}
\end{figure}

(2) For each bar of $D$ and the corresponding bar of $s(D)$, consider a horizontal line segment connecting them, and let $N$ be a regular neighborhood of the segment in $\R^2$.  The intersection $N \cap (s(D) \cup D)$ consists of vertical line seguments as in Figure~\ref{fig:suri19}. 
Cut open the two line segments with bars and connect them with two arcs having virtual crossings as in the figure.

\begin{figure}[H]
\begin{center}
\includegraphics[height=2.5cm]{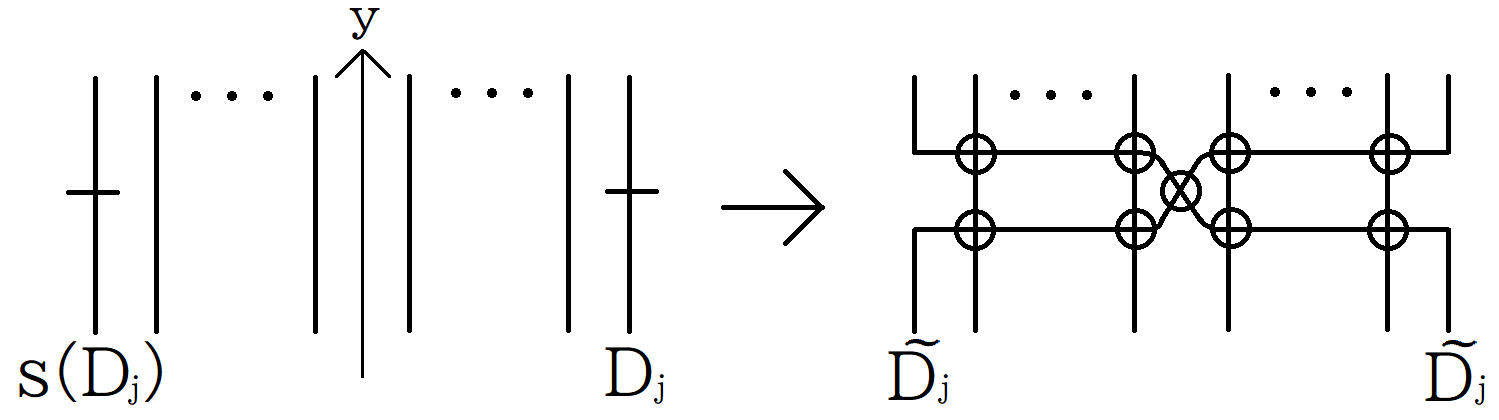}
\caption{Cut open $D$ at the bars on $s(D)$ and $D$ to obtain $\widetilde{D}$}
\label{fig:suri19}
\end{center}
\end{figure}

Now we obtain a virtual link diagram, which is a double covering diagram $\widetilde{D}$ of $D$. See Figure~\ref{fig:suri-20-arrow-17}.

\begin{theorem}
\label{thm:doubleB}
There exists infinitely many pairs of twisted links such that for each pair the two twisted links are distinct but their double coverings are equivalent as virtual links. 

Precisely speaking, 
for each positive integer $n$, there exits a pair of 
twisted link diagrams $(D(n)^x, D(n)^y)$ such that (1) they are not equivalent as (ordered or unordered) twisted links, (2) their double covering diagrams are equivalent as (partitioned or unpartitioned) virtual links, and (3)    
the twisted links represented by $D(n)^x$ and $D(n)^y$ are different from those by $D(n')^x$ and $D(n')^y$ for $n\neq n'$. 
\end{theorem}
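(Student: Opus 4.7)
The plan is to construct the family $(D(n)^x, D(n)^y)$ explicitly by invoking the construction method to be developed in Section~\ref{sect:construction}. That construction is tailored precisely so that the two diagrams in each pair automatically have equivalent double coverings, which disposes of property~(2) at once. Thus the real work is proving property~(1) (nonequivalence of $D(n)^x$ and $D(n)^y$ as twisted links for each fixed $n$) and property~(3) (the classes for different $n$ are distinct). Both will be handled by exhibiting a component of even type and computing the pair $(\psi_{+1},\psi_{-1})$ up to $2$-equivalence, which is an invariant by Theorem~\ref{thm b}.

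First I would make the pairs $(D(n)^x,D(n)^y)$ concrete: they are obtained from the two-component prototype already illustrated in Figure~\ref{fig:sr-46-47} (which realizes the minimal case $n=1$ and gives the non-$2$-equivalent pair $(0,1)$ vs.\ $(0,-1)$) by inserting $n$ additional twist/crossing gadgets in a controlled way along one component $D_1$. The gadgets will be chosen so that the construction method of Section~\ref{sect:construction} still applies, i.e.\ the double covers $\widetilde{D(n)^x}$ and $\widetilde{D(n)^y}$ remain identical up to virtual moves.

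Next I would select a component $D_1$ of even type in each diagram, compute its twisted defect $d_1$ (which will be a fixed small integer, say $0$ or $2$, independent of $n$ in our construction, so we work in $\Z[t^{\pm 1}]$ or $\Z[t^{\pm 1}]/(t^2-1)$), and pick any twisted intersection coloring mod $d_1$; this is possible by Lemma~\ref{lem e2}. The self-crossings on $D_1$ fall into the three sets $T$, $T_{+1}$, $T_{-1}$ of Definition~\ref{t ind}, and a direct reading off the diagram yields formulas of the form $\psi^{(D_1(n)^x,C)}=\psi^{(D_1(n)^y,C)}$ (both trivial by design) but $\psi_{+1}^{(D_1(n)^x,C)}=f_n(t)$, $\psi_{-1}^{(D_1(n)^x,C)}=g_n(t)$, while the $y$-variant gives the ``swapped or sign-changed'' pair $\psi_{+1}^{(D_1(n)^y,C')}=g_n(t)$, $\psi_{-1}^{(D_1(n)^y,C')}=-f_n(t)$ (or another explicit transform), where $f_n,g_n$ depend on $n$ through the inserted gadgets. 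The crux will then be to verify
\[
(f_n,g_n)\overset{(2)}{\not\equiv}(g_n,-f_n)\qquad\text{and}\qquad
(f_n,g_n)\overset{(2)}{\not\equiv}(f_{n'},g_{n'})\ \text{for } n\neq n',
\]
which reduces to checking that no monomial shift $(\cdot\, t^{2k},\cdot\, t^{-2k})$ and no coordinate swap can match the two sides. This is a finite combinatorial check on the exponents and coefficients of $f_n$ and $g_n$.

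The main obstacle is the design of the gadget, rather than any single step of the proof. On the one hand it must be compatible with the covering construction of Section~\ref{sect:construction} so that double coverings stay equivalent; on the other hand it must contribute enough distinct data to $(\psi_{+1},\psi_{-1})$ that the $2$-equivalence relation cannot collapse either $D(n)^x$ with $D(n)^y$ or $D(n)$ with $D(n')$. Once the gadget is chosen, the computation of $\psi_{+1}$ and $\psi_{-1}$ is purely local and Theorem~\ref{thm b} does the rest; properties (1), (2), (3) of Theorem~\ref{thm:doubleB} follow immediately.
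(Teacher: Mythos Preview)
Your outline is the paper's approach exactly: property~(2) comes for free from the Section~\ref{sect:construction} construction, and properties~(1) and~(3) are handled by Theorem~\ref{thm b} applied to a component of even type. Two concrete points where your sketch diverges from the paper's execution. First, the seed diagram is not the one-component example of Figure~\ref{fig:sr-46-47}; rather, $D(n)^x$ and $D(n)^y$ arise by feeding a two-component diagram $D(n)$ carrying $2n-1$ half-twists into the Section~\ref{sect:construction} machine, and on component~$1$ (where $d_1=2$) the invariants come out as $(2(n-1),\,2n)$ versus $(2(n-1)t,\,2nt)$ in $\Z[t^{\pm1}]/(t^2-1)$ --- a uniform factor of $t$, not a swap or sign flip as you conjecture --- and since $t^{2k}=1$ in this ring the non-$2$-equivalence is immediate. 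Second, your plan as written only yields the \emph{ordered} nonequivalence in~(1); to rule out equivalence as unordered links you must also compare after swapping components, and the paper handles this by computing the invariant on component~$2$ of $D(n)^x$, which is $(0,0)$ and hence not $2$-equivalent to $(2(n-1)t,\,2nt)$.
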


This theorem implies that the converse of Theorem~\ref{thm:doubleA} is not true.  

\begin{proof}
For each positive integer $n$, let 
$D(n)^x=D(n)_1^x\cup D(n)_2^x$ and $D(n)^y=D(n)_1^y\cup D(n)_2^y$ be twisted link diagrams depicted in 
Figure~\ref{fig:sr-39C}, where the box labeled with $n$ stands for $2n-1$ half twists as in Figure~\ref{fig:sr-39F}. 

\begin{figure}[H]
\begin{center}
\includegraphics[width=10cm]{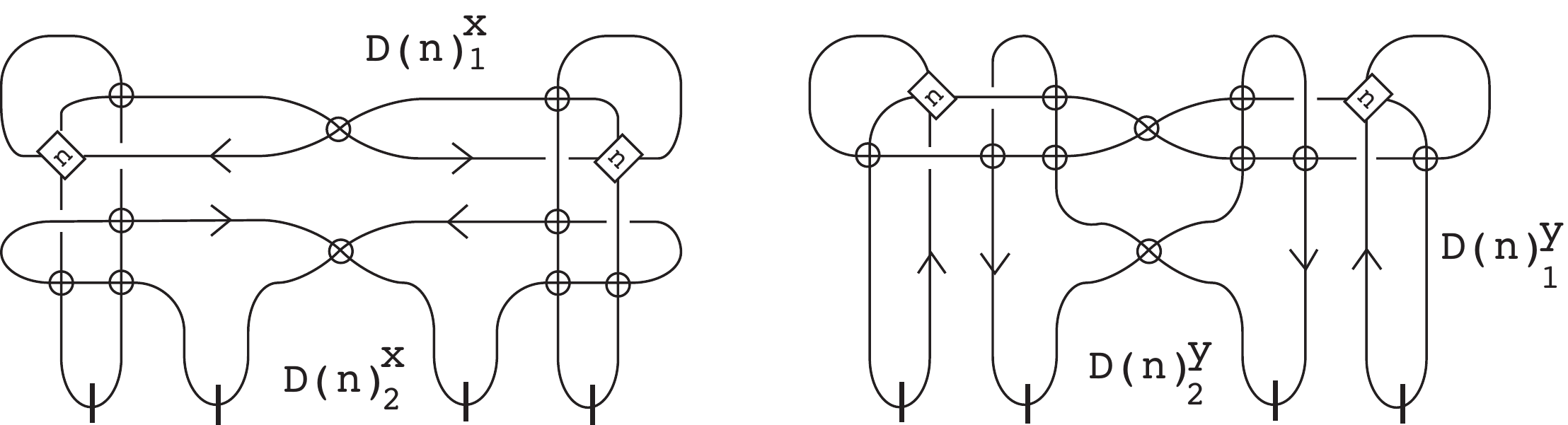}
\caption{$D(n)^x=D(n)_1^x\cup D(n)_2^x$ and $D(n)^y=D(n)_1^y\cup D(n)_2^y$}
\label{fig:sr-39C}
\end{center}
\end{figure}

\begin{figure}[H]
\begin{center}
\includegraphics[height=2.0cm]{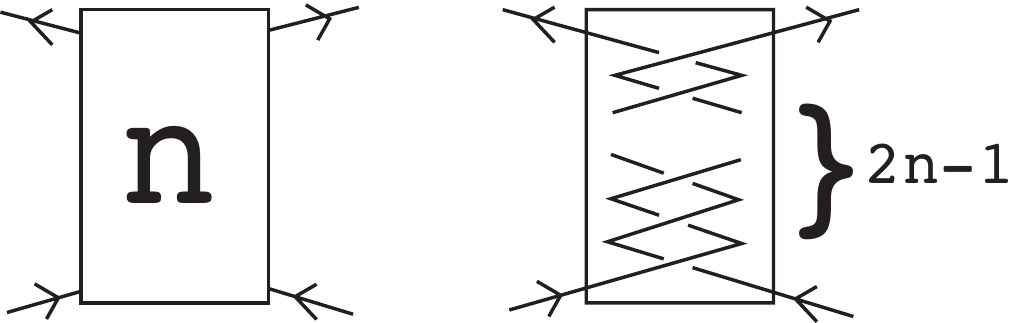}
\caption{$2n-1$ half twists}
\label{fig:sr-39F}
\end{center}
\end{figure}

By a direct computation, we see that $d_1(D(n)^x)=d_1(D(n)^y)=2$.  
Let $C(n)_1^x$ and $C(n)_1^y$ be twisted intersection colorings mod $2$ on $D(n)_1^x$ and $D(n)_1^y$  
as in Figures~\ref{fig:sr-39EB} and~\ref{fig:sr-39EC}, respectively.  

\begin{figure}[H]
\begin{center}
\includegraphics[width=13.0cm]{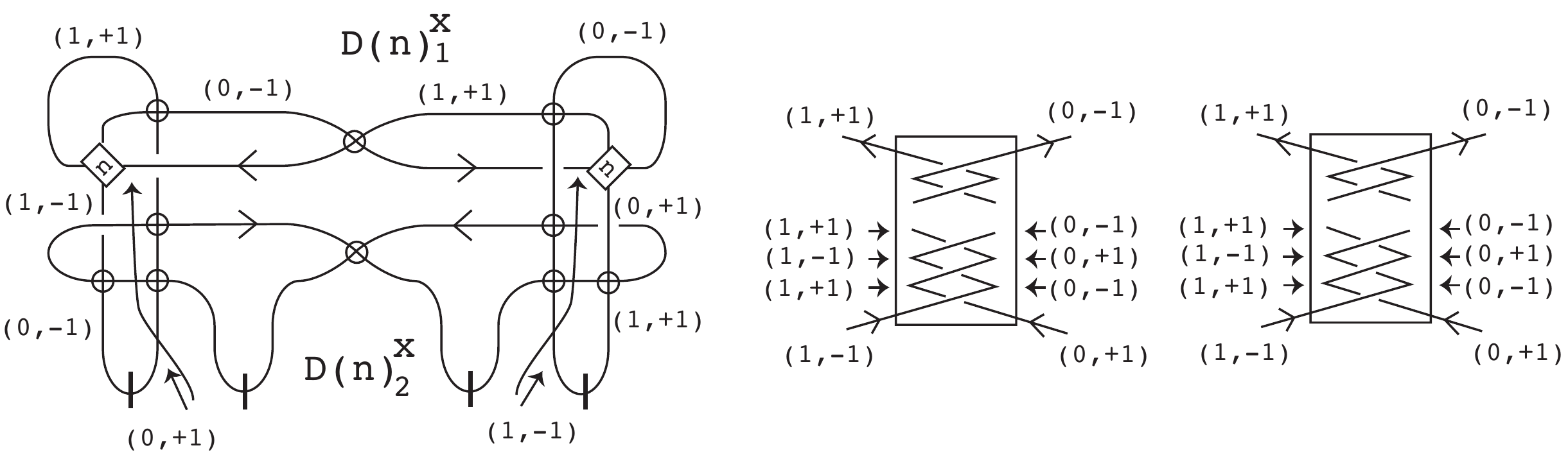}
\caption{$C(n)_1^x$, a twisted intersection coloring mod $2$ on $D(n)_1^x$}
\label{fig:sr-39EB}
\end{center}
\end{figure}

\begin{figure}[H]
\begin{center}
\includegraphics[width=13.0cm]{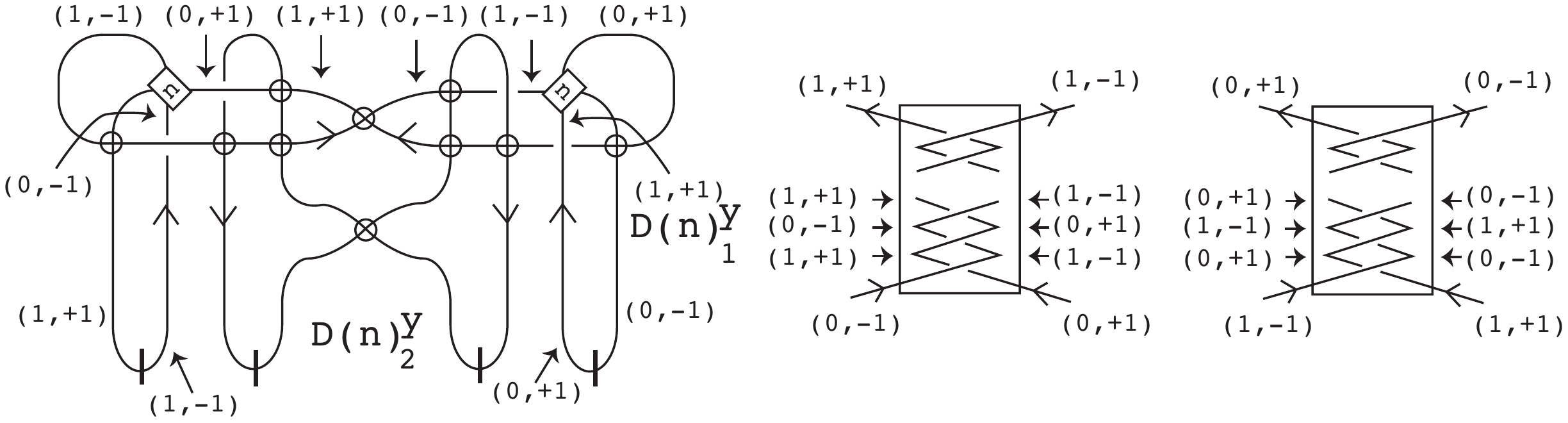}
\caption{$C(n)_1^y$, a twisted intersection coloring mod $2$ on $D(n)_1^y$}
\label{fig:sr-39EC}
\end{center}
\end{figure}

There exist $4n$ self-crossings of $D(n)_1^x$.  Two of them belong to $T^{(D(n)_1^x,C(n)_1^x)}$, 
$2(n-1)$ of them belong to $T_{+1}^{(D(n)_1^x,C(n)_1^x)}$ and $2n$ of them belong to $T_{-1}^{(D(n)_1^x,C(n)_1^x)}$.  
Let $\tau$ be any self-crossing belonging to $T_{+1}^{(D(n)_1^x,C(n)_1^x)}$ or $T_{-1}^{(D(n)_1^x,C(n)_1^x)}$, which is a crossing appearing in the $2n-1$ half twists.  
Then ${\rm sign}(\tau)=1$ and $W_{(D(n)_1^x,C(n)_1^x)}(\tau)=0 \in \Z_2$.  
Therefore we have 
$\psi_{+1}^{(D(n)_1^x,C(n)_1^x)}(t)=2(n-1)$ and $\psi_{-1}^{(D(n)_1^x,C(n)_1^x)}(t)=2n$ in $\Z[t^{\pm 1}]/(t^2-1)$.  
Hence 
$(\psi_{+1}^{(D(n)_1^x,C(n)_1^x)}(t), \psi_{-1}^{(D(n)_1^x,C(n)_1^x)}(t)) = ( 2(n-1), 2n)$.  
 
On the other hand, 
there exist $4n$ self-crossings of $D(n)_1^y$.  Two of them belong to $T^{(D(n)_1^y,C(n)_1^y)}$, 
$2(n-1)$ of them belong to $T_{+1}^{(D(n)_1^y,C(n)_1^y)}$ and $2n$ of them belong to $T_{-1}^{(D(n)_1^y,C(n)_1^y)}$.  
Let $\tau$ be any self-crossing belonging to $T_{+1}^{(D(n)_1^y,C(n)_1^y)}$ or $T_{-1}^{(D(n)_1^y,C(n)_1^y)}$. 
Then ${\rm sign}(\tau)=1$ and $W_{(D(n)_1^y,C(n)_1^y)}(\tau)=1 \in \Z_2$.  
Therefore we have 
$\psi_{+1}^{(D(n)_1^y,C(n)_1^y)}(t)=2(n-1)t$ and $\psi_{-1}^{(D(n)_1^y,C(n)_1^y)}(t)=2nt$ in $\Z[t^{\pm 1}]/(t^2-1)$. 
Hence 
$(\psi_{+1}^{(D(n)_1^y,C(n)_1^y)}(t), \psi_{-1}^{(D(n)_1^y,C(n)_1^y)}(t)) = ( 2(n-1)t, 2nt)$.  

Since 
$(\psi_{+1}^{(D(n)_1^x,C(n)_1^x)}(t), \psi_{-1}^{(D(n)_1^x,C(n)_1^x)}(t)) \overset{(2)}{\not\equiv}(\psi_{+1}^{(D(n)_1^y,C(n)_1^y)}(t), \psi_{-1}^{(D(n)_1^y,C(n)_1^y)}(t))$,  
by Theorem~\ref{thm b} we see that 
$D(n)^x$ and $D(n)^y$ are not equivalent as ordered twisted links.

By a direct computation, we see that $d_2(D(n)^x)=2$.  
Since there is no self-crossing on $D(n)_2^x$, we have 
$\psi_{+1}^{(D(n)_2^x,C(n)_2^x)}(t)= \psi_{-1}^{(C(n)_2^x,D_2^x)}(t)=0$ in $\Z[t^{\pm 1}]/(t^2-1)$. 
Since $(0,0) \overset{(2)}{\not\equiv} (2(n-1)t, 2nt)$, by Theorem~\ref{thm b} we see that 
$D(n)_2^x \cup D(n)_1^x$ and $D^y(n)$ are not equivalent as ordered twisted links. 
Therefore $D^x(n)$ and $D^y(n)$ are not equivalent as (ordered or unordered) twisted links. 

Using the invariants, we also see that 
the pairs of twisted links represented by $D(n)^x$ and $D(n)^y$ and those by $D(n')^x$ and $D(n')^y$ for $n \neq n'$ are distinct. 

\begin{figure}[H]
\begin{center}
\includegraphics[height=5.0cm]{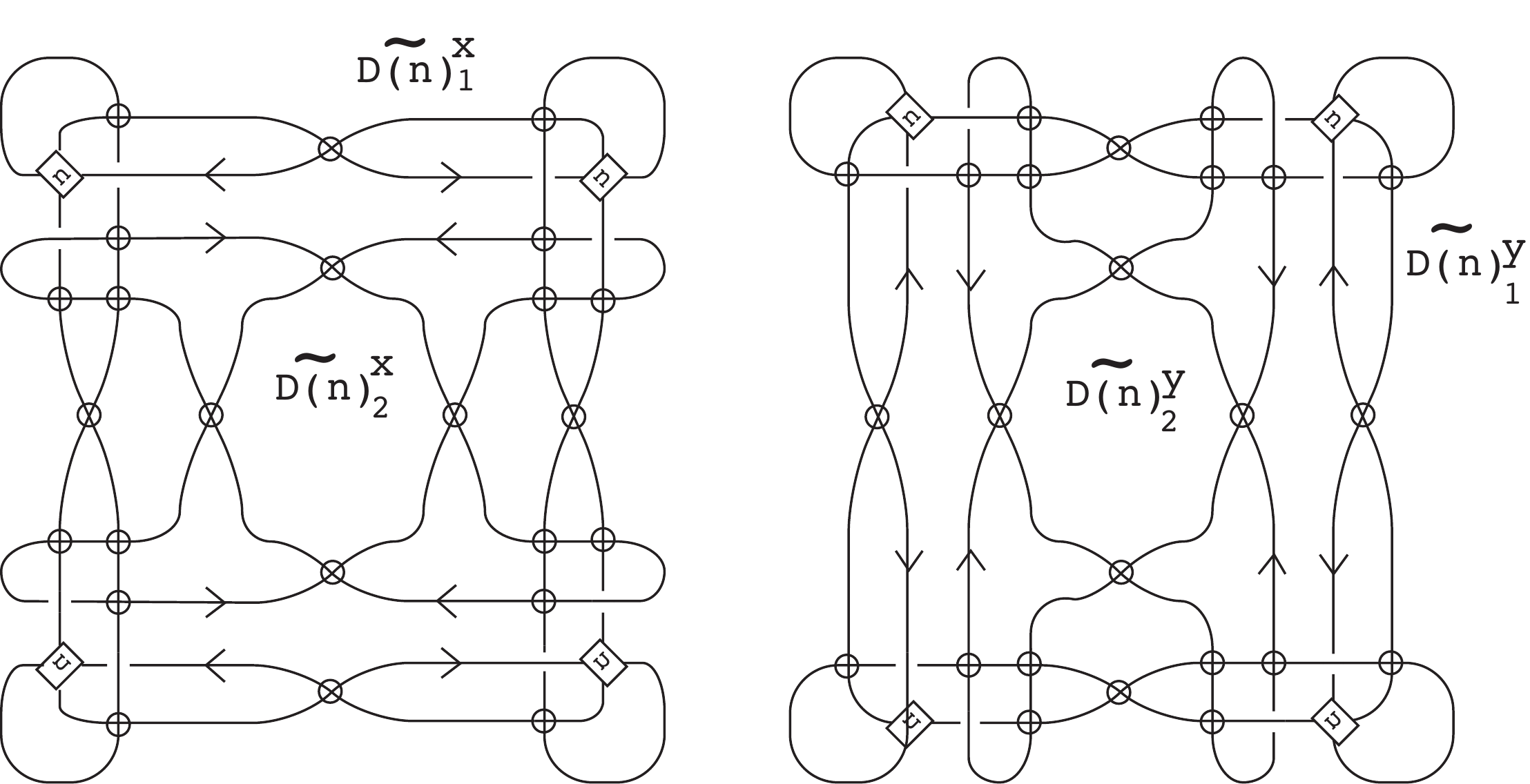}
\caption{Double covering diagrams $\widetilde{D}(n)^x$ and $\widetilde{D}(n)^y$}
\label{fig:sr-39G}
\end{center}
\end{figure}

Double cover diagrams 
$\widetilde{D}(n)^x$ and $\widetilde{D}(n)^y$ of $D^x(n)$ and $D^y(n)$ are depicted in Figure~\ref{fig:sr-39G}.  
Rotating $\widetilde{D}(n)^x$ by $90$ degrees, we obtain $\widetilde{D}(n)^y$.  
Thus they are equivalent as (partitioned) virtual links.   
\end{proof} 

\begin{remark}
We cannot use the index polynomial invariant $\psi$ for a proof of Theorem~\ref{thm:doubleB}. Let 
$D=D_1\cup\cdots\cup D_m$ and $D'=D'_1\cup\cdots\cup D'_m$ be twisted link diagrams such that their double covering diagrams are equivalent as partitioned virtual links.  
Then $D_i$ is of even type if and only if $D'_i$ is of even type.  In this case, we have that $d_i(D) = d_i(D')$ and 
$\psi^{(D_i,C_i)}(t) = \psi^{(D_i,C'_i)}(t)$ for any twisted intersection colorings $C_i$ and $C'_i$. This fact will be discussed elsewhere in a forthcoming paper. 
\end{remark}

\section{Construction of a pair of twisted links with equivalent double coverings}
\label{sect:construction}

In this section we give a method of constructing a pair of twisted link diagrams $D^x$ and $D^y$ whose double covering diagrams are equivalent as partitioned virtual links.  

Let $A$, $A_x$ and $A_y$ be subsets of $\R^2$ with 
$A=\{(x,y) \in \R^2 \mid 1 \leq x, 1 \leq y \}$, 
$A_x=\{(x,y) \in \R^2 \mid 1 \leq x, 0 \leq y \leq 1 \}$ 
and 
$A_y=\{(x,y) \in \R^2 \mid 0 \leq x \leq 1, 1 \leq y \}$.  

Let 
$D=D_1\cup\ldots\cup D_m$  be a twisted link diagram with $m$ components satisfying the following condition $(\ast)$:    

\begin{itemize}
\item[$(\ast)$] 
For each $i \in \{1, \dots, m\}$, there are exactly $2$ bars on $D_i$. One of the two bars is in $A_x$ and the other bar is in $A_y$. $D_i \cap A_x$ and $D_i \cap A_y$ are as in Figure~\ref{fig:sr-27}, where 
$D_1 \cap A_x, \dots, D_m \cap A_x$ and $D_1 \cap A_y, \dots, D_m \cap A_y$ are arranged in the order shown in the figure, and $D$ is contained in $A \cup A_x \cup A_y$.  
\end{itemize}

\begin{figure}[H]
\begin{center}
\includegraphics[height=3.5cm]{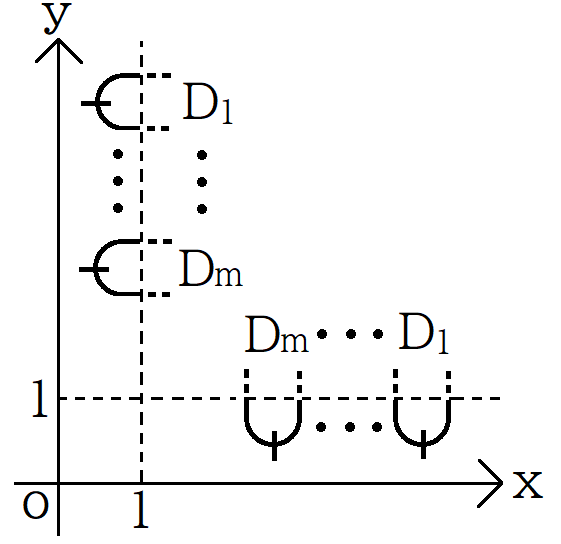}
\caption{The condition $(\ast)$ for $D=D_1\cup\ldots\cup D_m$}
\label{fig:sr-27}
\end{center}
\end{figure}

Let $s_x(D)=s_x(D_1)\cup\ldots\cup s_x(D_m)$ (or $s_y(D)=s_y(D_1)\cup\ldots\cup s_y(D_m)$) 
be a diagram obtained from $D=D_1\cup\ldots\cup D_m$ by applying a reflection along the $x$-axis (or $y$-axis) and then switching over/under information at all classical crossings.   
The union $D\cup s_x(D)$ (or $D\cup s_y(D)$) is depicted in Figure~\ref{fig:sr-28-29}. 

\begin{figure}[H]
\begin{tabular}{cc}
\begin{minipage}{0.40\hsize}
\begin{center}
\includegraphics[height=6.0cm]{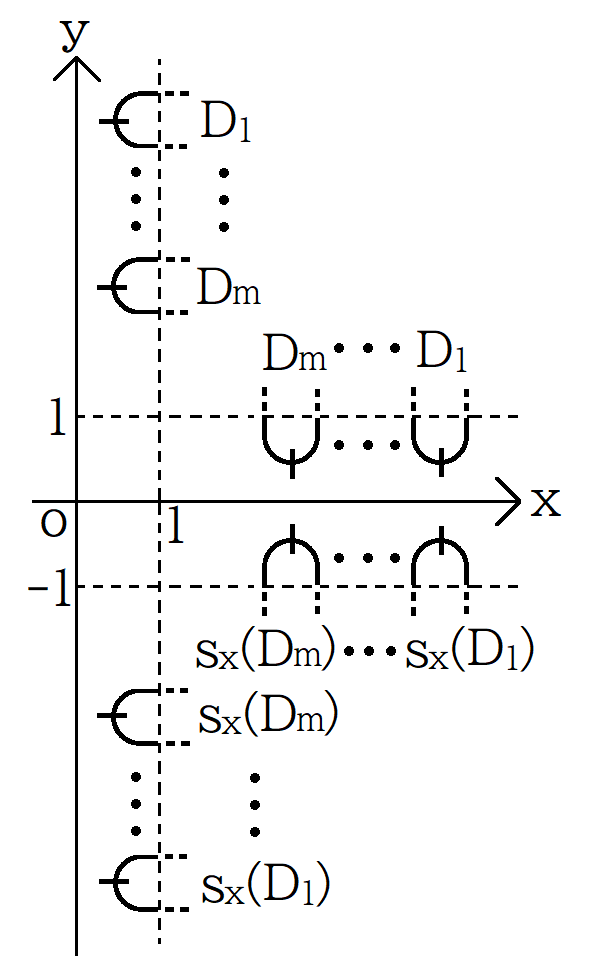}
$D\cup s_x(D)$
\end{center}
\end{minipage}

\begin{minipage}{0.60\hsize}
\begin{center}
\includegraphics[height=3.0cm]{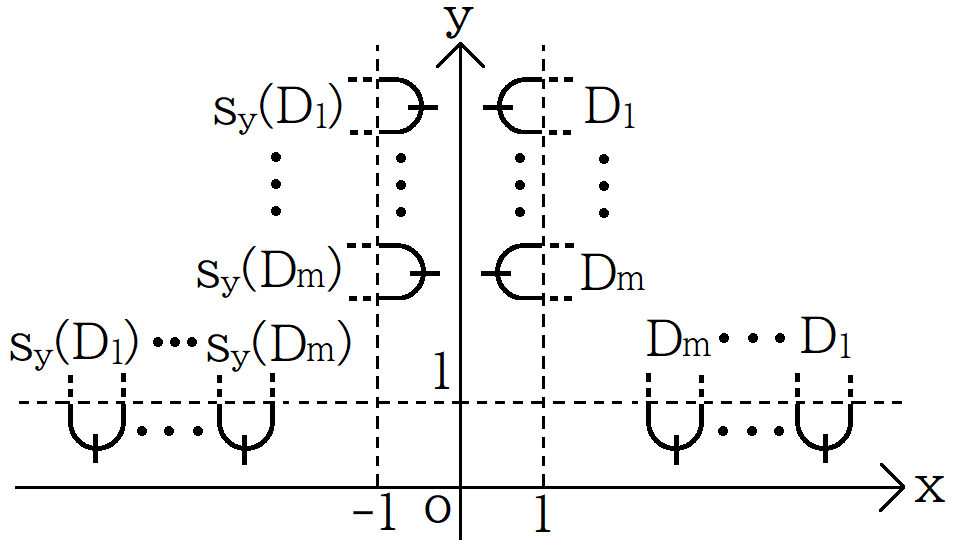}

$D\cup s_y(D)$
\end{center}
\end{minipage}

\end{tabular}
\caption{$D\cup s_x(D)$ and $D\cup s_y(D)$}
\label{fig:sr-28-29}
\end{figure}

Let $D^x=D_1^x\cup\ldots\cup D_m^x$ (or $D^y=D_1^y\cup\ldots\cup D_m^y$) be a diagram obtained from 
$D\cup s_x(D)$ (or $D\cup s_y(D)$) by replacing the arcs with bars with arcs with virtual crossings as depicted in Figure~\ref{fig:sr-30} (or Figure~\ref{fig:sr-31})

\begin{figure}[H]
\begin{center}
\includegraphics[height=2.3cm]{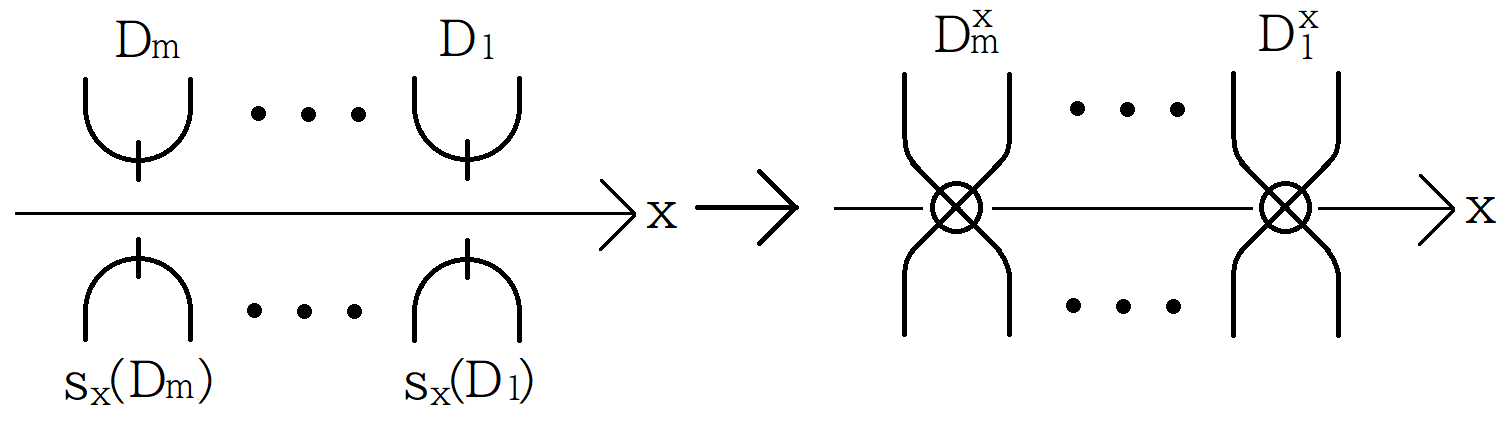}
\end{center}
\caption{Replacement to obtain $D^x=D_1^x\cup\ldots\cup D_m^x$}
\label{fig:sr-30}
\end{figure}

\begin{figure}[H]
\begin{center}
\includegraphics[height=3.5cm]{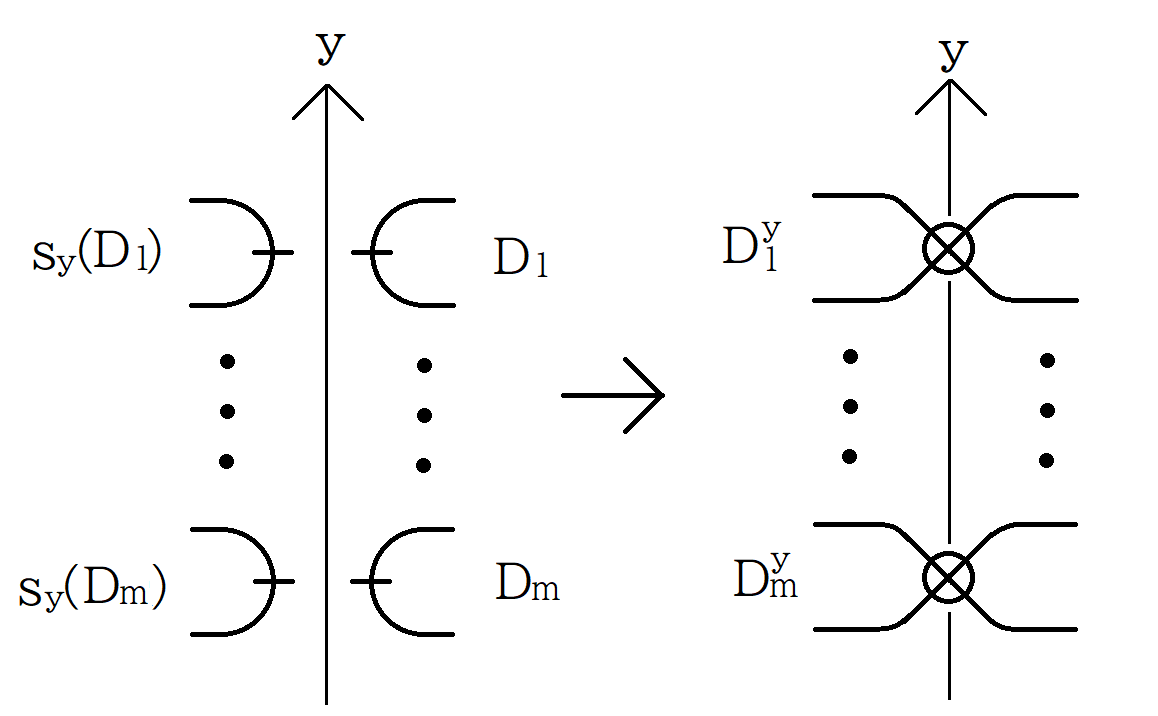}
\end{center}
\caption{Replacement to obtain $D^y=D_1^y\cup\ldots\cup D_m^y$}
\label{fig:sr-31}
\end{figure}

Then $D^x$ and $D^y$ are twisted virtual link diagrams with $m$ components such that all components are of even type, and     
their double covering diagrams $\widetilde{D}^x$ and $\widetilde{D}^y$ are considered as partitioned virtual link diagrams with $\{1, \dots, m\}$.  

\begin{proposition}
\label{prop:a}
In the situation above,  double covering diagrams $\widetilde{D}^x$ and $\widetilde{D}^y$ are equivalent as partitioned virtual links. 
\end{proposition}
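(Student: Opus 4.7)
The plan is to construct a single canonical partitioned virtual link diagram $D^{xy}$ and show that both $\widetilde{D^x}$ and $\widetilde{D^y}$ are equivalent to it. Explicitly, I would place four copies $D$, $s_x(D)$, $s_y(D)$, and $s_{xy}(D) := s_x(s_y(D))$ of the original diagram in the four L-shaped regions obtained as the images of $A\cup A_x\cup A_y$ under the identity, $s_x$, $s_y$, and their composition. Note that because $s_x$ and $s_y$ each switch all classical crossings, $s_{xy}$ involves no net crossing switch, and is just $180^\circ$ rotation. The remaining bars on these four copies (those coming from $A_y$ and from $A_x$) are then connected in pairs by arcs with virtual crossings across the $x$-axis (for the $A_x$-type bars, in the strips $A_x$ and $s_y(A_x)$) and across the $y$-axis (for the $A_y$-type bars, in the strips $A_y$ and $s_x(A_y)$), using the replacement of Figure~\ref{fig:suri19} / Figure~\ref{fig:sr-30}. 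The partition is given by labeling all four copies of $D_i$ by $i$.

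First I would verify $\widetilde{D^x} = D^{xy}$ by directly unfolding the constructions. Since $D^x$ already lies in $\{x\ge 0\}$ and, by construction, has only the $A_y$-type bars remaining (the $A_x$-type bars having been replaced in the definition of $D^x$), the standard double covering procedure of Section~\ref{sect:double} produces $s_y(D^x)=s_y(D)\cup s_{xy}(D)$ in $\{x\le 0\}$, together with the already-present virtual-crossing arcs from the $A_x$-type bars and their $s_y$-reflections, and then replaces the four families of remaining bars with virtual-crossing arcs across the $y$-axis. This is precisely $D^{xy}$.

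For $\widetilde{D^y}$ I would first rotate $D^y$ by $-90^\circ$ via $r(x,y)=(y,-x)$ so as to place it in the right half-plane. By Theorem~\ref{thm:doubleA}, since $r$ is an isotopy of $\R^2$, the double covers $\widetilde{D^y}$ and $\widetilde{r(D^y)}$ are equivalent as partitioned virtual link diagrams. Then I would verify, again by unfolding the construction, that applying the standard double covering to $r(D^y)$ yields $r(D^{xy})$ exactly. This uses the identities $s_y\circ r = r\circ s_x$ and $s_y\circ r\circ s_y = r\circ s_{xy}$, both of which follow from a direct coordinate check while the parities of crossing switches match on the two sides. Finally, since $r$ is an orientation-preserving ambient isotopy of $\R^2$, the diagrams $r(D^{xy})$ and $D^{xy}$ are equivalent as partitioned virtual link diagrams, so
$$\widetilde{D^x}\;=\;D^{xy}\;\sim\;r(D^{xy})\;=\;\widetilde{r(D^y)}\;\sim\;\widetilde{D^y}.$$

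The main obstacle I anticipate is the combinatorial bookkeeping in the second verification: one must confirm that each of the four L-regions in $r(D^{xy})$ carries the correct image of $D$ (with the correct crossing-switch parity) and that each virtual-crossing arc replacing a bar pair sits in the strip predicted by the conjugation identities above. A clean way to organize this is to let $G=\{\mathrm{id},s_x,s_y,s_{xy}\}\cong\Z_2\times\Z_2$ act on the L-regions and on the set of signed copies of $D$, and to observe that the constructions of $D^x$ and $D^y$ are the two $\Z_2$-quotients of this $G$-symmetric picture, while the subsequent double coverings are precisely the two liftings back. Interpreted this way, the equality on the nose of the four L-pieces becomes the statement that both $G$-quotients have the same $G$-cover, which is essentially tautological.
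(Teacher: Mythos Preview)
Your approach is correct and essentially the same as the paper's: both construct the four-piece diagram from $D\cup s_x(D)\cup s_y(D)\cup s_y\!\circ\! s_x(D)$ with the virtual-crossing replacements and identify it simultaneously as $\widetilde{D}^x$ and $\widetilde{D}^y$. The paper's argument is more laconic, simply asserting this identification without your rotation detour---in effect computing $\widetilde{D}^y$ by reflecting $D^y$ across the $x$-axis, which is legitimate since the double covering diagram is well-defined up to moves ${\rm V1},\dots,{\rm V4}$.
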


\begin{proof}

Let $s_y\circ s_x(D)$ be the link diagram $s_y(s_x(D))$, which is the same with $s_x(s_y(D))$.  
Consider a diagram obtained from $D\cup s_x(D)\cup s_y(D)\cup s_y\circ s_x(D)$ by the replacement as in 
Figure~\ref{fig:sr-32}. Then it is $\widetilde{D}^x$, and also $\widetilde{D}^y$.  \end{proof}

\begin{figure}[H]
\begin{center}
\includegraphics[height=5.0cm]{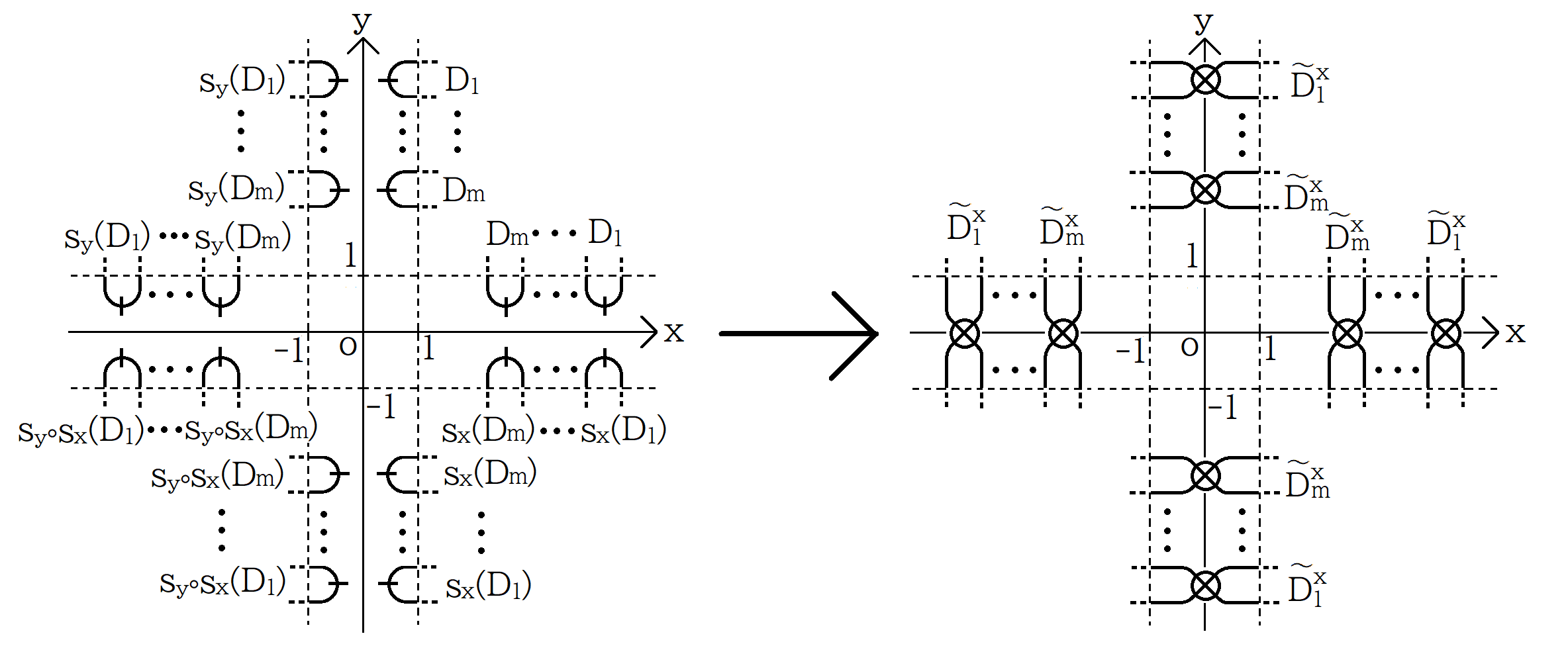}
\caption{$D\cup s_x(D)\cup s_y(D)\cup s_y\circ s_x(D)\longrightarrow\widetilde{D}^x=\widetilde{D}_1^x\cup\ldots\cup\widetilde{D}_m^x$}
\label{fig:sr-32}
\end{center}
\end{figure}

Let $D(n)=D(n)_1\cup D(n)_2$ be a diagram depicted in Figure~\ref{fig:sr-38B}.  Then 
$D(n)^x=D(n)_1^x\cup D(n)_2^x$ and $D(n)^y=D(n)_1^y\cup D(n)_2^y$ are as in Figure~\ref{fig:sr-39C}, where $D(n)^x$ is rotated by $90$ degrees.

\begin{figure}[H]
\begin{center}
\includegraphics[height=3.0cm]{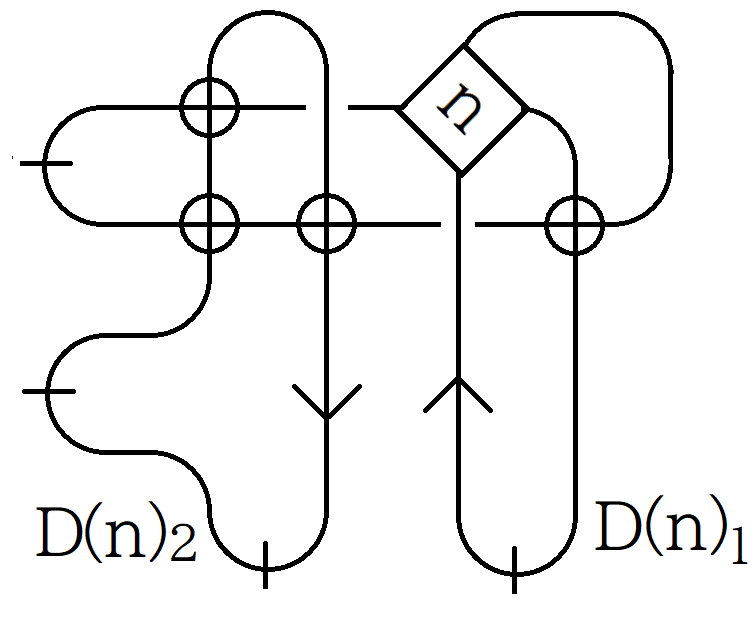}
\caption{$D(N)=D(n)_1\cup D(n)_2$}
\label{fig:sr-38B}
\end{center}
\end{figure}



\end{document}